\documentclass[12pt]{elsarticle}


\usepackage[latin1,utf8]{inputenc}
\usepackage[english]{babel}
\usepackage{graphicx}
\usepackage{amsmath,amsfonts,amssymb,amsthm}
\usepackage{subfigure}
\usepackage{xcolor}
\usepackage{bm}
\usepackage{booktabs}
\usepackage{changes}
\usepackage[top=2cm]{geometry}
\usepackage{rotating}
\usepackage{mathtools}
\definecolor{blun}{cmyk}{0.8, 0.5, 0, 0.7}
\usepackage[bookmarks,colorlinks,pdfauthor={Cinzia Soresina},linkcolor=blun,citecolor=blun]{hyperref}

\renewcommand{\H}{\mathcal{H}}
\newcommand{\R}{\mathcal{R}}
\newcommand{\A}{\mathcal{A}}
\newcommand{\x}{\bm{x}}
\newcommand{\y}{\bm{y}}

\newcommand{\llambda}{\bm{\lambda}}
\newcommand{\ls}{\lambda_S}
\newcommand{\dls}{\dot{\lambda}_S}
\newcommand{\li}{\lambda_I}
\newcommand{\dli}{\dot{\lambda}_I}
\newcommand{\uv}{u_v}
\newcommand{\uc}{u_c}
\newcommand{\uq}{u_i}
\newcommand{\ur}{u_r}
\newcommand{\uu}{u_{l}}
\newcommand{\au}{\alpha_1}
\newcommand{\ad}{\alpha_2}
\newcommand{\vmax}{\uv^{max}}
\newcommand{\cmax}{\uc^{max}}
\newcommand{\qmax}{\uq^{max}}
\newcommand{\rmax}{\ur^{max}}
\newcommand{\umax}{u^{max}}
\newcommand{\Tumax}{T_{\tau=0}}
\newcommand{\interv}{starting intervention }

\theoremstyle{plain}
\newtheorem{teo}{Theorem}
\newtheorem{oss}{Remark}

\newtheorem{corollario}{Corollary}

\theoremstyle{definition}
\newtheorem{defin}{Definition}

\newtheorem{teoapp}{Theorem}

\journal{Mathematical Biosciences}










\begin{document}

\begin{frontmatter}

\title{Time-optimal control strategies in SIR epidemic models}

\author[IZSLER]{Luca Bolzoni\corref{mycorrespondingauthor}}\cortext[mycorrespondingauthor]{Corresponding author}\ead{luca.bolzoni@izsler.it}
\author[UniPr]{Elena Bonacini}
\author[UniMi]{Cinzia Soresina}
\author[UniPr]{Maria Groppi}

\address[IZSLER]{Risk Analysis Unit, Istituto Zooprofilattico Sperimentale della Lombardia e dell'Emilia Romagna, Via dei Mercati 13, 43126 Parma, Italy}
\address[UniPr]{Department of Mathematics and Computer Science, University of Parma, Parco Area delle Scienze 53/A, 43124 Parma, Italy }
\address[UniMi]{Department of Mathematics ``F. Enriques'', University of Milano,\\ Via C. Saldini 50, 20133 Milano, Italy}

\begin{abstract}

We investigate the time-optimal control problem in SIR (Susceptible-Infected-Recovered) epidemic models, focusing on different control policies: vaccination, isolation, culling, and reduction of transmission.
Applying the Pontryagin's Minimum Principle (PMP) to the unconstrained control problems (i.e. without costs of control or resource limitations), we prove that, for all the policies investigated, only bang-bang controls with at most one switch are admitted.
When a switch occurs, the optimal strategy is to delay the control action some amount of time and then apply the control at the maximum rate for the remainder of the outbreak.\\
This result is in contrast with previous findings on the unconstrained problems of minimizing the total infectious burden over an outbreak, where the optimal strategy is to use the maximal control for the entire epidemic.
Then, the critical consequence of our results is that, in a wide range of epidemiological circumstances, it may be impossible to minimize the total infectious burden while minimizing the epidemic duration, and vice versa.\\
Moreover, numerical simulations highlighted additional unexpected results, showing that the optimal control can be delayed also when the control reproduction number is lower than one and that the switching time from no control to maximum control can even occur after the peak of infection has been reached.
Our results are especially important for livestock diseases where the minimization of outbreaks duration is a priority due to sanitary restrictions imposed to farms during ongoing epidemics, such as animal movements and export bans.

\end{abstract}

\begin{keyword} Minimum time \sep Bang-bang control \sep Delayed intervention \sep Infectious disease outbreak \sep SIR model \sep Deterministic epidemic \end{keyword}

\end{frontmatter}

\section{Introduction}
The emergence and re-emergence of infectious diseases represent a major threat to public health and may cause heavy economic and social losses. Recent epidemics of Ebola in West Africa and MERS-CoV in South Korea highlighted once again the requirement for strong public health interventions for fast disease eradication \cite{dong_hyun15, kucharski15}.

In a similar way, outbreaks of infectious diseases in domestic animals may cause significant consequences for both the sustainability of the livestock industry and the costs associated to disease surveillance, control, and eradication. Moreover, the economic burdens imposed by livestock diseases exceed the agricultural compartment, by affecting also commerce, tourism, and even human health in the infected areas. Consequently, minimizing the time period needed for outbreaks eradication in the affected areas represents a public health priority.

There exist several examples of livestock epidemics causing huge sanitary and economic impacts, such as the 1996 epidemic of classical swine fever in The Netherlands \cite{meuwissen99}, the 2001 epidemic of foot-and-mouth in the UK \cite{DEFRA}, and the 2015 epidemic of high pathogenic avian influenza in Midwestern USA \cite{hvistendahl15}.
From the epidemiological point of view, the main indicators generally used to describe the severity of these infection events in livestock are: (\textit{i}) the total number of infected animals and farms during an epidemic, and (\textit{ii}) the duration of the epidemic. The rationale behind these indicators is based on the evidence that epidemic surveillance and control costs are directly related to spatial and temporal extension of the epidemic events \cite{horst99}. Furthermore, the effect of the epidemic duration on the socio-economic burdens associated to livestock diseases is larger than in human diseases. This is due to the sanitary restrictions imposed to farms in infected areas during ongoing outbreaks, such as animal movement and export bans. Moreover, the block or the restriction of farm activities can go over the time of infection, carrying on until the disease-free status is formally regained \citep{hutber2006predictions}. Examples of costly restrictions for the livestock industry include: the export ban of UK cattle because of the 1996 bovine spongiform encephalopathy epidemic \citep{gordon2008bovine} and the export ban of poultry and poultry related products in Hong Kong, Laos, Thailand, and The Netherlands due to outbreaks of highly pathogenic avian influenza \citep{koopmans04, nerlich07, otte08}.

By using a stochastic modeling framework for classical swine fever in The Netherlands pig farms, Mangen et al. \cite{mangen04} showed that the increase of the epidemic duration affects the sanitary costs associated to disease outbreaks more than a proportional growth in the number of infected farms. This prediction followed from the observation that longer epidemics are more widespread, involving a larger number of animals slaughtered. The estimate of the epidemic duration appears almost invariably in the simulation outputs of data-driven mathematical models developed to evaluate the effectiveness and the efficiency of surveillance and control policies for several infections in livestock, such as foot-and-mouth disease \citep{roche15}, classical swine fever \citep{durr13}, bovine tuberculosis \citep{rossi15}, and avian influenza \citep{longworth14}. However, few attempts have been made to address the problem of minimizing the epidemic duration from a theoretical point of view by using optimal control theory. To our knowledge, the only example of analytic characterization of the control function in a time-optimal framework is due to Jiang \cite{jiang07}, who focused on the analysis of isolation strategies in a subsystem of the model proposed in Zhang et al. \cite{zhang05} to describe SARS spread. On the other hand, the optimal control theory has been widely applied to solve the problem of minimizing the total number of infected individuals (or the total infectious burden) in basic SIR (Susceptible-Infected-Recovered) epidemic models by means of different control policies, such as: the implementation of emergency prophylactic vaccination plans, the isolation of infected individuals, the reduction of disease transmission through the limitation of contacts between individuals, and non-selective culling \citep{abakuks73, abakuks74, morton74, wickwire75, behncke00, hansen11JOMB, bolzoni14}.

Prophylactic vaccination consists in the vaccination of susceptible individuals; its goal is to prevent the development of diseases.
Isolation consists in the quarantine of infected individuals. As regards livestock diseases, in SIR models isolation is mathematically equivalent to removal of infected individuals through test-and-cull procedures.
Non-selective culling consists in the slaughtering of both infected and healthy individuals and it is usually implemented in wildlife and livestock when no other options are available (e.g. no diagnostic tests available, lack of time or resources). The rationale for culling healthy individuals resides in the positive relationship between the rate at which individuals become infected and the abundance of susceptible individuals.
Among humans, the reduction of transmission can be obtained through information campaigns or emergency movement bans (e.g. school closures, flight limitations), while in livestock it can be obtained by imposing limitations on animal, vehicle, and personnel movements among farms.

Those cited studies solved the optimal control problem for the minimization of the infectious burden in unconstrained conditions (i.e. without costs of control or resource limitations). They showed that the optimal strategy always relies in the adoption of the maximum control for the entire epidemic. In this context, maximum control is intended as the implementation of the control policy at its maximum available rate.

Here, by using simple SIR models in an optimal control framework \citep{pontryagin}, we thoroughly investigate the problem of minimizing the epidemic duration by using prophylactic vaccination, isolation, non-selective culling, or reduction of transmission controls.
In this study, we will show that the optimal control strategies to minimize the epidemic duration in SIR models can substantially differ from those minimizing the infectious burden.
Specifically, we will prove that: (\textit{i}) using the maximum control for the entire epidemic duration may not be the optimal strategy (even in unconstrained conditions); and (\textit{ii}) when the maximum control for the entire epidemic is not an optimal strategy, a delayed control is optimal. Consequently, our results lead to the conclusion that minimizing the epidemic duration does not always imply minimizing the total infectious burden, and vice versa.

\section{Optimal control problem: general setting} \label{stan}
We describe the evolution of the infection in a host population with a standard deterministic SIR model \citep{anderson79}, that can be described by the following system of two ordinary differential equations (ODEs):
\begin{equation}
\begin{cases}
\dot{S} = f_1(S,I) = -\beta SI\\
\dot{I} = f_2(S,I) =  \beta SI - \mu I,
\end{cases}\label{SI}
\end{equation}
where $S(t)$ and $I(t)$ represent the number of susceptible and infected hosts, respectively, $\beta$ represents the transmission rate of the infection and $\mu$ represents the loss rate of infected individuals through both mortality and recovery. If we denote by $\x(t)=(S(t),I(t))^{\sf T}$ the column vector that describes the state of the system at time $t$, we can rewrite system \eqref{SI} in the more compact form $\dot{\x}=f(\x)$.

In our analysis, we consider four different control policies, namely: vaccination, isolation, culling, and reduction of transmission. We denote the generic control policy rate applied at time $t$ by $u(t)$, which is assumed to be a piecewise continuous function that takes values in a positive bounded set $U=[0,u^{max}]$. We apply the different policies separately by adding a linear term in the control variable $u(t)$ to model \eqref{SI}, namely considering the general system
\begin{equation}
\dot{\x}(t)=f(\x(t))+u(t)g(\x(t)),
\label{dot_x}
\end{equation}
where the function $g$ depends on the chosen control policy. Specifically, we define a general linear term policy
\begin{equation}\label{gL}
g_{l}(\x)= \begin{pmatrix} -\alpha_1 S\\ -\alpha_2 I \end{pmatrix}
\end{equation}
which is a linear function of $S$ and $I$ that allows to model
\begin{align}
	\text{Vaccination}  									\qquad\qquad & \alpha_1=1, \alpha_2= 0: \qquad g_v(\x)= \begin{pmatrix} -S\\ 0 \end{pmatrix}\label{gv} \\
	\text{Isolation} 										\qquad\qquad & \alpha _1=0, \alpha_2= 1: \qquad g_i(\x)= \begin{pmatrix}  0\\-I \end{pmatrix}\label{gq} \\
	\text{Culling} 											\qquad\qquad &  \alpha _1=1, \alpha_2= 1: \qquad g_c(\x)= \begin{pmatrix} -S\\-I \end{pmatrix}\label{gc}
\end{align}
and, in addition, we consider the nonlinear term policy
\begin{equation} \label{gr}
	\text{Reduction of transmission} 		\qquad\qquad  g_r(\x)= \begin{pmatrix} -\beta SI\\-\beta SI \end{pmatrix}.
\end{equation}

We define the {\it basic reproduction number} for model \eqref{SI} as $\R_0=\beta S(0)/\mu$, which represents the average number of secondary infections produced by a single infected individual in a completely susceptible population in the absence of control \citep{anderson79}. In addition, for each policy we will define the {\it control reproduction number} $\R_C$ that represents the average number of secondary infections produced by a single infected individual in a completely susceptible population with control measures in place \citep{allen}.
From this definition, it follows that, when $\R_C<1$, control measures applied at the beginning of the epidemic are able to immediately reduce the number of the infected individuals (i.e. $\dot{I}(0)<0$).
The target will be the minimization of the eradication time of the infection. Existence of the eradication time in problem \eqref{dot_x} is guaranteed by the results in \ref{appendice}.
\begin{defin}[Eradication Time] The eradication time $T$ of the controlled SIR problem \eqref{dot_x} is the first time at which the number of infected individuals reaches the threshold $\varepsilon$, where $\varepsilon<1$ is a fixed positive constant. \end{defin}

We will chose initial conditions of infected individuals $I(0)$ strictly greater than $\varepsilon$. As a consequence,   $T$ being the first time at which the variable $I$ reaches $\varepsilon$, it holds that $\dot{I}(T)<0$.

We can then write the optimal control problem \citep{pontryagin} where the goal is:
\begin{align}
\text{minimize:} 		\quad & J(u) = \int_0^T 1 dt \quad\text{(Eradication time)}\notag\\
\text{subject to:} 	\quad & \dot{\x}(t)=f(\x(t))+u(t)g(\x(t)), \quad t\geq0;\label{TOC} \\
													& \x(0)=\x_0, \quad \x(T)\in \mathcal{C} = \{ (S,I)\,:\; I=\varepsilon \} \notag\\
													& u : [0,+\infty) \rightarrow U=[0,u^{max}] \text{ piecewise continuous},\notag
\end{align}
where $g$ is defined by the chosen control policy.

Given the optimal control problem \eqref{TOC} with $f,g\in\mathcal{C}^\infty(\mathbb{R}^2)$, we apply the Pontryagin's Minimum Principle \citep{pontryagin} in order to find a characterization of the optimal control strategy.

\begin{teo}[Pontryagin's Minimum Principle for linear time-optimal control problem \citep{pontryagin}]\label{PMP}
Suppose that $u^*(t)$ is a minimizer for the optimal control problem \eqref{TOC} and let $\x^*(t)=(S^*(t),I^*(t))^{\sf T}$ and $T^*$ denote the optimal solution of problem \eqref{dot_x} and the optimal eradication time, respectively. Then, there exists a piecewise $\mathcal{C}^1$ vector function $\llambda^*(t)=(\lambda^*_S(t),\lambda^*_I(t))^{\sf T}\neq0$ such that
\[ \dot{\llambda}^*(t) = -\nabla_x \H (\x^*(t),u^*(t),\llambda^*(t))^{\sf T}, \]
where the Hamiltonian is defined as $\H(\x,u,\llambda)=1+\llambda^{\sf T} (f(\x)+ug(\x))$, and:
\begin{enumerate}
	\item the function $h(w)=\H(\x^*(t),w,\llambda^*(t))$ attains its minimum on U at $w=u^*(t)$:
	\[ \H(\x^*(t),u^*(t),\llambda^*(t)) \leq \H(\x^*(t),w,\llambda^*(t)), \quad \forall w\in U\]
	for every $t\in[0,T^*]$;
	\item the Hamiltonian is constant equal to zero along the optimal solution: \[ \H(\x^*(t),u^*(t),\llambda^*(t))=0; \]
	\item the following transversality condition holds: $\lambda^*_S(T^*)=0$.
\end{enumerate}
Moreover, because the Hamiltonian is linear in the control variable, the value of $u^*(t)$ is determined by the sign of the switching function $\psi(\x,\llambda)=\llambda^Tg(\x)$ for all the time instants $t$ at which $\psi(\x^*(t),\llambda^*(t))$ does not vanish:
\[ u^*(t)= \begin{cases}
			0 & \text{if } \psi(\x^*(t),\llambda^*(t))>0\\
			u^{max} & \text{if } \psi(\x^*(t),\llambda^*(t))<0. \end{cases} \]
\end{teo}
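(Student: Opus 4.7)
The stated result is a specialization of the classical Pontryagin Minimum Principle to our setting: smooth control-affine dynamics, fixed initial state $\x_0$, free terminal time $T$, and terminal constraint $\x(T)\in\mathcal{C}=\{I=\varepsilon\}$. My plan is to invoke the abstract PMP as a black box and then compute each of its conclusions in this specific framework.

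First I would verify the hypotheses of the abstract principle: $f,g\in\mathcal{C}^\infty(\mathbb{R}^2)$ by assumption, $U=[0,u^{max}]$ is compact and convex, and $\mathcal{C}$ is a smooth one-dimensional submanifold of $\mathbb{R}^2$. Existence of an optimal pair $(\x^*,u^*)$ with optimal eradication time $T^*$ is supplied by the result recalled in the appendix. The abstract PMP then yields a nontrivial multiplier $(\lambda_0,\llambda^*)$ with $\lambda_0\geq 0$ such that $\dot{\llambda}^*=-\nabla_\x\H^T$ for $\H=\lambda_0+\llambda^{*T}(f+u^*g)$, together with the pointwise minimum condition and a terminal transversality condition.

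Second, I would specialize each piece. The transversality at $\x^*(T^*)\in\mathcal{C}$ requires $\llambda^*(T^*)$ to be orthogonal to the tangent line of $\mathcal{C}$ at that point, which is spanned by $(1,0)^T$; this immediately gives $\lambda_S^*(T^*)=0$. Autonomy of the dynamics makes $\H$ constant along the optimum, while the free-final-time condition fixes this constant to zero, so $\H\equiv 0$. Normality ($\lambda_0=1$) follows by contradiction: if $\lambda_0=0$, then $\H(T^*)=0$ combined with $\lambda_S^*(T^*)=0$ gives $\lambda_I^*(T^*)\,\dot{I}(T^*)=0$; since $\dot I(T^*)<0$ at the eradication time (as noted after the definition of $T$), we obtain $\lambda_I^*(T^*)=0$ and hence $\llambda^*\equiv 0$ by uniqueness for the adjoint ODE, contradicting the nontriviality of $(\lambda_0,\llambda^*)$.

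Third, the bang-bang form is immediate: $\H$ is affine in the control variable with slope $\psi(\x,\llambda)=\llambda^Tg(\x)$, so pointwise minimization over $[0,u^{max}]$ yields $u^*(t)=0$ when $\psi>0$ and $u^*(t)=u^{max}$ when $\psi<0$. The main obstacle I anticipate is the normality step just described, since it relies on the sign of $\dot I(T^*)$ together with the adjoint equation; by contrast, the structural analysis of singular arcs (intervals where $\psi$ vanishes identically) is not asserted by the present statement and is deferred to the subsequent sections, where each of the specific policies $g_v,g_i,g_c,g_r$ must be treated in turn.
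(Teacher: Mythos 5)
Your proposal is correct, but note that the paper does not actually prove this statement at all: Theorem \ref{PMP} is quoted as a classical result with the citation to Pontryagin et al., and the specialization to the SIR setting (normality, the form of the transversality condition, the vanishing of the Hamiltonian) is left entirely implicit. What you have written is therefore a sound reconstruction of the missing bridge between the abstract principle and the version stated here. Your three specializations all check out: the tangent space to $\mathcal{C}=\{I=\varepsilon\}$ is indeed spanned by $(1,0)^{\sf T}$, so orthogonality of $\llambda^*(T^*)$ to it gives exactly $\lambda^*_S(T^*)=0$; autonomy plus free terminal time gives $\H\equiv0$; and your normality argument is the one genuinely nontrivial step, correctly exploiting $\H(T^*)=0$, $\lambda^*_S(T^*)=0$ and $\dot{I}(T^*)<0$ (which the paper justifies right after the definition of the eradication time, from $I(0)>\varepsilon$ and $T$ being the first hitting time) to force $\lambda^*_I(T^*)=0$ and hence a contradiction with nontriviality of the multiplier --- in fact you do not even need backward uniqueness of the adjoint flow, since nontriviality of $(\lambda_0,\llambda^*(t))$ is required pointwise, in particular at $t=T^*$. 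You are also right to flag that the analysis of singular arcs is not part of this statement; the paper defers it to the policy-specific proofs in the appendices, where it is carried out via the computation of $\dot\psi$ for each choice of $g$.
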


\subsection{Admissible optimal controls and numerical method}

The results that we will prove in the next sections can be summarized in the following theorem.

\begin{teo}\label{teo bang-bang} For each considered control policy the optimal control for problem \eqref{TOC} is bang-bang. The optimal strategy consists either in a constant control $u^*(t)\equiv u^{max}$ or in a delayed control $0\rightarrow u^{max}$ with a single switching time $\tau_s^*$, namely $u^*(t)=0$ for $t\in[0,\tau_s^*)$ and $u^*(t)=u^{max}$ for $t\in(\tau_s^*,T^*]$. In addition, if the optimal control is delayed, three different behaviors are allowed, depending on the position of the switching time $\tau_s^*$ compared to the peak of infection, leading to the four different types of admissible optimal control sketched in Fig. \ref{fig:legenda}.
\end{teo}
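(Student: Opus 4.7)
The plan is to apply Theorem~\ref{PMP} to each of the four policies \eqref{gv}--\eqref{gr} and analyze the switching function $\psi(t)=\llambda^{*{\sf T}}(t)\,g(\x^*(t))$ along the optimal trajectory. PMP already gives $u^*(t)=0$ on $\{\psi>0\}$ and $u^*(t)=u^{max}$ on $\{\psi<0\}$, so the theorem reduces to three sub-claims: (a) $\psi$ does not vanish on any open subinterval of $[0,T^*]$ (no singular arc); (b) $\psi$ has at most one sign change; (c) if it changes sign at $\tau_s^*$, the sign is positive on $[0,\tau_s^*)$ and negative on $(\tau_s^*,T^*]$, i.e.\ the switch is $0\to u^{max}$. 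Once these are established, the four admissible profiles sketched in Fig.~\ref{fig:legenda} follow by partitioning the single-switch case according to whether $\tau_s^*$ falls before, at, or after the peak of $I^*$, together with the no-switch case $u^*\equiv u^{max}$.

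For the three linear-term policies I would write the Hamiltonian $\H=1+\lambda_S(-\beta SI-u\alpha_1 S)+\lambda_I(\beta SI-\mu I-u\alpha_2 I)$, deduce the adjoint system from $\dot{\llambda}^*=-\nabla_x\H^{\sf T}$, and compute $\dot\psi$ directly along $(\x^*,\llambda^*)$. A short calculation shows that the $u$-dependent terms cancel in $\dot\psi$ (as expected since $\dot\psi$ encodes the Lie bracket of $f$ with $g$), leaving an expression in $(\x^*,\llambda^*)$ only. For sub-claim (a), assuming $\psi\equiv 0$ on an open interval and imposing $\dot\psi\equiv 0$ as well yields a homogeneous linear system in $(\lambda_S^*,\lambda_I^*)$ whose determinant is nonzero on the physically relevant region $\{S>0,\,I>\varepsilon\}$; hence $\llambda^*\equiv 0$, contradicting the non-triviality clause of PMP. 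The same strategy applies to the reduction-of-transmission policy after using $g_r=-\beta SI\,(1,1)^{\sf T}$.

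For sub-claims (b)--(c), I would exploit the transversality condition $\lambda_S^*(T^*)=0$ together with the Hamiltonian-zero condition, which at $t=T^*$ collapses to $1+\lambda_I^*(T^*)\dot I^*(T^*)=0$. Since $\dot I^*(T^*)<0$ by the definition of the eradication time, this forces $\lambda_I^*(T^*)>0$. For isolation, culling, and reduction of transmission, the terminal value of $\psi$ is then strictly negative, so $u^*(T^*)=u^{max}$, and it remains to show that once $\psi$ becomes negative it cannot return to zero. I would do this by evaluating the sign of $\dot\psi$ at any putative zero $t_0<T^*$ of $\psi$: substituting $\psi(t_0)=0$ into the state-only expression for $\dot\psi$ gives a strictly signed quantity, which rules out a second sign change and, by the same token, rules out a negative-to-positive crossing. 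For vaccination, where $\alpha_2=0$ and $\psi(T^*)=0$, one needs to examine $\dot\psi(T^*)$ (and, if necessary, $\ddot\psi(T^*)$) to determine the sign of $\psi$ in a left neighborhood of $T^*$ before running the same zero-crossing argument.

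I expect the main obstacle to be the reduction-of-transmission policy, where $g_r$ is nonlinear and $\psi=-\beta SI(\lambda_S+\lambda_I)$ depends multiplicatively on the state, so $\dot\psi$ produces extra terms that require a careful rearrangement using both state and adjoint equations to isolate a definite sign. A secondary delicate point is the vaccination case: the transversality condition makes $\psi(T^*)$ degenerate, so the position of $\tau_s^*$ relative to the peak of infection (needed for the four-way classification of Fig.~\ref{fig:legenda}) must be extracted from higher-order derivatives of $\psi$ or from monotonicity of an auxiliary function built from $(\x^*,\llambda^*)$, rather than from $\psi$ itself.
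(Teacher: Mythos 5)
Your overall architecture is the same as the paper's: (a) non-singularity by showing $\psi=\dot\psi=0$ forces $\llambda^*=0$ (the determinant you compute is $-\beta SI(\alpha_1^2S+\alpha_2^2I)\neq 0$, exactly the paper's contradiction with the non-triviality clause of PMP); the terminal sign of $\psi$ via $\lambda_S^*(T^*)=0$ and $\H=0$, giving $\lambda_I^*(T^*)=-1/\dot I^*(T^*)>0$; and a classification of the zeros of $\psi$ by the sign of $\dot\psi$ there. However, the step on which (b)--(c) hinge --- ``substituting $\psi(t_0)=0$ into the state-only expression for $\dot\psi$ gives a strictly signed quantity'' --- is false for culling. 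There, at a zero of $\psi$ one gets $\lambda_S=\bigl(S\,Q\bigr)^{-1}$ with $Q(t)=\beta\bigl(S(t)+I(t)\bigr)-\mu$, and hence $\dot\psi=-\beta(S+I)/Q$, whose sign is \emph{opposite to that of $Q$}; since $Q$ decreases along the trajectory and can cross zero, $\dot\psi$ at a putative zero of $\psi$ is not a priori signed. The paper closes this gap with an extra idea your plan is missing: $Q$ is strictly decreasing, the \emph{last} switch must satisfy $\dot\psi<0$ (because $\psi<0$ near $T^*$), hence $Q>0$ there and therefore on the whole interval up to that switch, so every zero would be a $0\to u^{max}$ crossing --- impossible for two consecutive zeros. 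The same backward-propagation-from-$T^*$ device, not a pointwise sign computation, is what delivers uniqueness in the general linear case $\alpha_2\neq0$.

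The vaccination case has the analogous defect: at a zero of $\psi$ one finds $\lambda_S=0$ and $\lambda_I=-1/\dot I$, so the sign of $\dot\psi$ there is opposite to that of $\dot I$, i.e.\ it flips at the peak of infection. Your proposed remedy (higher-order derivatives of $\psi$ at $T^*$) only settles the terminal sign of $\psi$, which is indeed needed since $\psi(T^*)=0$ when $\alpha_2=0$; it does not by itself classify interior zeros. The paper's argument is that a post-peak zero would necessarily be a negative-to-positive crossing, contradicting $\psi<0$ in a left neighbourhood of $T^*$, so all switches lie before the peak, where $\dot\psi<0$ forces uniqueness and the $0\to u^{max}$ direction; this is also precisely how the ``switch only before the peak'' part of the vaccination refinement is obtained. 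For isolation and reduction of transmission your pointwise computation does work ($\dot\psi(t_0)=-1$ and $-\beta S(t_0)$ respectively), but note that the sign convention in the displayed $g_r$ of \eqref{gr} is inconsistent with the Hamiltonian actually used in the appendix (the correct switching function is $\psi=(\lambda_S-\lambda_I)\beta SI$, not $-\beta SI(\lambda_S+\lambda_I)$), so your formula for $\psi$ in that case would need to be corrected before the sign argument goes through.
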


\begin{figure}\centering
\includegraphics[width=\columnwidth]{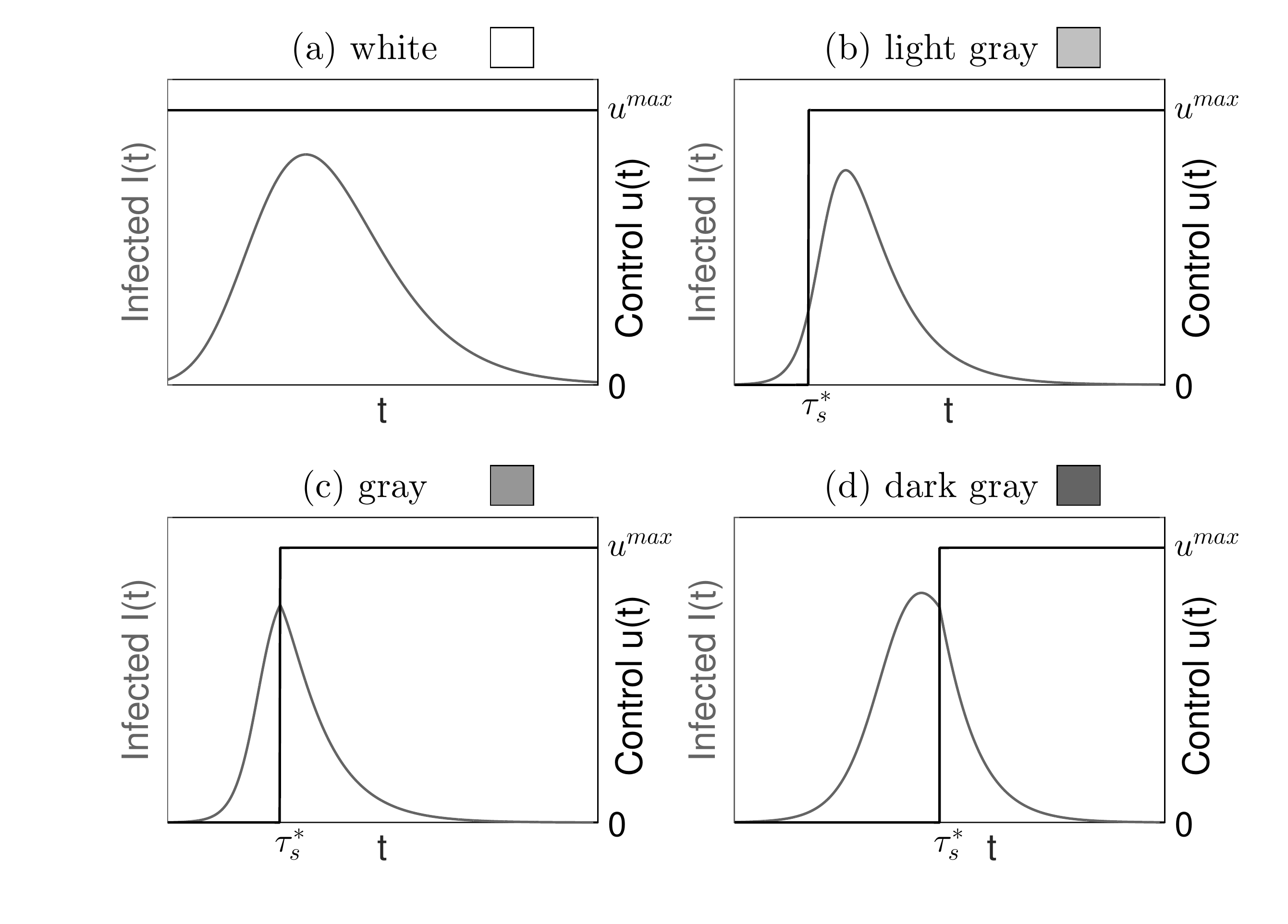}
\caption{Schematization of the four types of admissible optimal control and legend of the plot colors that will be used throughout the article. White (panel a) denotes a constant control at its maximum value. Different shades of gray denote delayed controls applied at the switching time $\tau^*_s$. We distinguish three different behaviors, depending on the position of $\tau^*_s$ with respect to the infectious dynamics: the switch occurs before the peak of the infection (panel b, light gray), in correspondence of the peak (panel c, gray) or after the peak of infection (panel d, dark gray).}\label{fig:legenda}
\end{figure}

We will denote the set of such admissible optimal controls by
\begin{align}
\A=\bigg\{ u:&[0,+\infty)\rightarrow\{0,\,u^{max}\} \text{ piecewise constant with at most}\notag\\
			 &\text{ one jump from 0 to $u^{max}$ }, \lim_{t\rightarrow+\infty}u(t)=u^{max} \bigg\}.\label{A}
\end{align}

As regards the numerical solution, several numerical methods for the optimal solution of both minimum time and bang-bang control problems can be found in literature. Such techniques are mainly based on shooting methods \citep{kaya,lenhart,martinon}, smooth regularizations of the control function \citep{silva}, or pseudospectral methods \citep{shamsi}. However, since our problem is characterized by the particular class of optimal controls $\A$ in \eqref{A}, for our numerical simulations we will use a simpler ad hoc numerical scheme. The method is based on the idea of identifying each bang-bang function $u(t)\in\A$ with a real parameter. Since the delayed optimal control function is not defined at the switching time instant, for numerical simulations we fix by convention $u^*(\tau_s^*)=u^{max}$. Then, we can generalize the idea of switching time, defined as the zero of the switching function $\psi$, introducing the {\it \interv time}
\begin{equation}
\tau= \begin{cases}
0 			\quad & \text{in case of constant maximum control}\\
\tau_s 	\quad & \text{in case of delayed control}
\end{cases}
\label{eq:tau}
\end{equation}
which represents the first time instant at which the control $u(t)\in\A$ assumes the value $u^{max}$. Since a delayed control can be characterized by its switching time $\tau_s$, we can then identify each admissible optimal control in $\A$, constant or delayed, by the value of $\tau$ and write it in the more general form:
\begin{equation}
u(t;\tau)= \begin{cases}
0 				\quad & 0<t<\tau\\
u^{max} 	\quad & \tau\leq t<+\infty.
\end{cases}
\label{eq:utau}
\end{equation}

Therefore, the functional to be minimized $J(u)$ can be seen as the function\\ $J:\tau\rightarrow T$, that links the \interv time $\tau\geq0$ to the eradication time $T$ of the problem
\begin{equation}
\begin{aligned}
\dot{\x}(t)=f(\x(t))+u(t;\tau)g(\x(t)), \quad t\geq0; \\
\x(0)=\x_0, \quad \x(T)\in \mathcal{C} = \{ (S,I)\,:\; I=\varepsilon \}.
\end{aligned}
\label{J(tau)}
\end{equation}
An optimal control $u^*$ will be identified by a \interv time such that $\tau^*= \text{argmin}\, J$, since it can be proved that $J$ always admits at least a minimum value (see Theorem \ref{teo:exist}).
The numerical solution will be computed by evaluating the function $J(\tau)$ over a suitable interval and looking for its minimum value. In particular, we fix a uniform mesh $\{\tau_i,\, i=1,\ldots,M\}$ over the interval $[0,T_{unc}]$, where $T_{unc}$ is the eradication time of the uncontrolled epidemic. For each mesh point we consider the related control function $u(t;\tau_i)$ and numerically integrate the Cauchy problem \eqref{J(tau)} using the Crank-Nicholson method with uniform time steps $\{t_k,\, k=1,\ldots,N\}$, obtaining the numerical solution $\x^{(i)}_k=(S^{(i)}_k,I^{(i)}_k)^{\sf T}$, $k=1,\ldots,N$. Then, we set the eradication time $T_i$ relevant to the mesh point $\tau_i$ as the first time step $t_{\bar{k}}$ at which the computed solution $I^{(i)}_{\bar{k}}\leq\varepsilon$. Finally, we take the minimum over the set of computed eradication times $T_j=\min\{T_i,\, i=1,\ldots,M\}$ as the optimal eradication time, and set the corresponding $\tau_j$ as the optimal starting intervention time.

In the following sections we investigate the four different control policies considered. For each policy we will present theoretical and numerical results.  In our numerical simulations, we set $\varepsilon = 0.5$, as in \citep{hansen11JOMB}. Then, through a sensitivity analysis, we explore the solutions of optimal control problems \eqref{gv}--\eqref{gr} on a wide range of parameter settings describing different epidemiological conditions (represented by $\R_0 = \beta S(0)/ \mu$), different possible control efforts (represented by $u^{max}$), and a different number of initially introduced infected individuals in the population (represented by $I(0)$).

\section{Linear term policies}
We consider SIR model \eqref{SI} with the general linear term control, denoted by $\uu(t)$, obtaining an optimal control problem as the one defined in \eqref{TOC}, with $g_{l}(\x)$ as in \eqref{gL}.

\begin{teo}\label{teo:VIC} If $\uu^*$ is the optimal control strategy for the linear term control problem, then $\uu^*$ is a bang-bang control with at most one switching time $\tau_s^*$ from no control to maximum control.
\end{teo}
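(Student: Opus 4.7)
The plan is to apply Pontryagin's Minimum Principle (Theorem~\ref{PMP}) and analyse the switching function
\[
\psi(\x,\llambda)=\llambda^{\sf T} g_l(\x)=-\alpha_1\lambda_S S-\alpha_2\lambda_I I,
\]
exploiting that by PMP, $\uu^{*}=0$ where $\psi>0$ and $\uu^{*}=u^{max}$ where $\psi<0$, so the whole question reduces to locating and signing the zeros of $\psi$ along the optimal trajectory.

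First I would differentiate $\psi$ along the optimal trajectory and substitute the adjoint equations $\dot\lambda_S=\lambda_S(\beta I+u\alpha_1)-\lambda_I\beta I$ and $\dot\lambda_I=\lambda_S\beta S-\lambda_I(\beta S-\mu-u\alpha_2)$: the terms carrying $u$ cancel and one obtains the control-free expression
\[
\dot\psi=\beta SI\,(\alpha_1\lambda_I-\alpha_2\lambda_S).
\]
If $\psi\equiv 0$ on an interval, then $\dot\psi\equiv 0$ too, and the resulting $2\times 2$ linear system on $(\lambda_S,\lambda_I)$ has determinant $\alpha_1^{2} S+\alpha_2^{2} I>0$, forcing $\llambda\equiv 0$ against $\H\equiv 0$; so singular arcs are excluded and $\uu^{*}$ is bang-bang. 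Combining the transversality $\lambda_S^{*}(T^{*})=0$ with $\H\equiv 0$ and $\dot I(T^{*})<0$ gives $\lambda_I^{*}(T^{*})=-1/\dot I(T^{*})>0$, hence $\psi(T^{*})\leq 0$; in the vaccination case where $\psi(T^{*})=0$, the identity $\dot\psi(T^{*})=\alpha_1\beta SI\lambda_I^{*}>0$ still forces $\psi<0$ just before $T^{*}$. In every case $\uu^{*}\equiv u^{max}$ on a left neighbourhood of $T^{*}$, so whenever a switch exists, the \emph{last} one must be of the form $0\to u^{max}$.

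The decisive step is to pin down the sign of $\dot\psi$ on the zero set $\{\psi=0\}$. Imposing $\psi=0$ together with $\H=0$ lets me express $\dot\psi|_{\psi=0}$ in terms of a single monotone state quantity: for isolation, $\H=0$ gives $\lambda_S=-1/\dot S=1/(\beta SI)>0$, whence $\dot\psi|_{\psi=0}=-\beta SI\lambda_S<0$ identically; for vaccination, after using $\lambda_S=0$ and $\dot I=I(\beta S-\mu)$, the sign is opposite to that of $\beta S-\mu$; for culling, after eliminating $\lambda_S$ through $\psi=0$, the sign is opposite to that of $\beta(S+I)-\mu$. The crucial observation is that $S$ is strictly decreasing (vaccination) and $S+I$ is strictly decreasing for culling, since $\dot S+\dot I=-\uu(S+I)-\mu I<0$; consequently the sign of $\dot\psi|_{\psi=0}$ can flip at most once in time, at a threshold $t_{p}$.

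To close, I would label a zero $\tau$ of $\psi$ as \emph{type A} if $\dot\psi(\tau)<0$ (the admissible direction $0\to u^{max}$) and \emph{type B} otherwise. For isolation only type A is available, and since the sign of $\psi$ must alternate between successive zeros, at most one switch can occur. For vaccination and culling, type A is confined to $\{t<t_{p}\}$ and type B to $\{t>t_{p}\}$; the terminal analysis forces the last switch to be type A and thus to lie in $\{t<t_{p}\}$, while a hypothetical second, earlier switch would have to be type B and hence lie in $\{t>t_{p}\}$, contradicting the time ordering. Thus at most one switch survives, and it is necessarily of type A. The main obstacle I expect is the third step: isolating the strictly monotone state quantity that governs $\dot\psi|_{\psi=0}$ for each policy; once that is in hand, the counting argument follows almost for free from the transversality condition.
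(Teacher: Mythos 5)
Your proposal is correct and follows essentially the same route as the paper's proof: the same switching function with $\dot\psi=\beta SI(\alpha_1\lambda_I-\alpha_2\lambda_S)$, the same exclusion of singular arcs via the nondegenerate linear system in $(\lambda_S,\lambda_I)$, the same terminal analysis giving $\lambda_I(T^*)=-1/\dot I(T^*)>0$ and hence $\psi<0$ near $T^*$, and the same decisive step of signing $\dot\psi$ on $\{\psi=0,\,\H=0\}$ through a monotone state quantity followed by the alternation/ordering contradiction. The only difference is presentational: the paper unifies the $\alpha_2\neq 0$ cases through the single quantity $Q(t)=\beta I+\tfrac{\alpha_1}{\alpha_2}(\beta S-\mu)$ and treats $\alpha_2=0$ separately, whereas you specialize the monotone quantity to each of the three policies ($\beta S-\mu$, the constant sign $\dot\psi=-1$, and $\beta(S+I)-\mu$, respectively).
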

\begin{proof} See \ref{dimgen}. \end{proof}

We proceed now to analyze the peculiarities of each policy involved in the general formulation.

\subsection{Vaccination}
We consider the vaccination control, denoted by $\uv(t)$ in the optimal control problem  \eqref{TOC}, with $g_v(\x)$ as in \eqref{gv}. The control reproduction number for vaccination is defined as $\R_C^v=\R_0=\beta S(0)/\mu$.

For this policy, it is easy to prove that there exists a unique time instant $t_p$ (possibly 0) at which the function $\dot{I}$ changes sign. In particular, $\dot{I}(t)>0$ for $t<t_p$ and $\dot{I}(t)<0$ for $t>t_p$. We call $t_p$ the peak time, because it represents the time at which the number of infected individuals reaches its maximum. Therefore, in addition to the general results of Theorem \ref{teo:VIC}, it is possible to prove the following.

\begin{teo}\label{teo:vacc}  The switch of the optimal control $\uv^*$ can occur only before the peak of the infection. Moreover, if $\R_0<1$ or $\vmax>\mu$, the optimal control is the constant control $\uv^*(t)\equiv \vmax$.
\end{teo}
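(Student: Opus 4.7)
The plan splits the theorem along its two assertions. From Theorem~\ref{teo:VIC}, any optimal $\uv^*$ is either the constant $\vmax$ or has a single switch $0\to\vmax$ at some time $\tau_s^*$, so I only need to (i) locate any admissible switch and (ii) rule it out under the hypotheses $\R_0<1$ or $\vmax>\mu$.

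For (i), I would analyze the switching function $\psi=-\lambda_S S$ predicted by Theorem~\ref{PMP}. A direct calculation using the adjoint equations gives $\dot\psi=\beta SI\lambda_I$ for every admissible control, so the sign of $\dot\psi$ is controlled by $\lambda_I$. At a switching instant $\tau_s^*$, continuity of $\psi$ together with $S>0$ forces $\lambda_S(\tau_s^*)=0$, and the identity $\H\equiv 0$ then collapses to $1+\lambda_I(\tau_s^*)\dot I(\tau_s^*)=0$, giving $\lambda_I(\tau_s^*)=-1/\dot I(\tau_s^*)$. A switch of type $0\to\vmax$ requires $\psi$ to cross downward at $\tau_s^*$, so $\dot\psi(\tau_s^*)<0$ and thus $\lambda_I(\tau_s^*)<0$, which in turn yields $\dot I(\tau_s^*)>0$. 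Hence $\tau_s^*$ lies strictly before the peak time $t_p$; the equality $\tau_s^*=t_p$ is excluded because $\dot I(t_p)=0$ is incompatible with the Hamiltonian identity.

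The $\R_0<1$ branch of (ii) follows immediately: $\beta S(0)<\mu$ together with the monotonicity $\dot S\le 0$ gives $\dot I(t)<0$ for every $t\ge 0$, so $t_p=0$ and (i) leaves no admissible switching time, forcing $\uv^*\equiv\vmax$ as the only extremal. For the $\vmax>\mu$ branch I would argue by contradiction, supposing a switch $\tau_s^*\in(0,t_p)$. My preferred route is a sensitivity argument: let $T(\tau)$ denote the eradication time when the switch is placed at $\tau$, and compute $T'(\tau)=-\eta(T)/\dot I(T)$ via the variational equations for $(\xi,\eta)=(\partial S/\partial\tau,\partial I/\partial\tau)$, which at $t=\tau$ carry Cauchy data $(\xi(\tau),\eta(\tau))=(\vmax S_0(\tau),0)$ produced by the jump of the vector field at the switch. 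Showing $\eta(T)>0$ yields $T'(\tau)>0$ (since $\dot I(T)<0$), placing the minimum of $J$ at $\tau=0$ and contradicting any nontrivial switch. I expect the threshold $\vmax>\mu$ to emerge from the interplay between the $\vmax$ term in $\dot\xi=-(\beta I+\vmax)\xi-\beta S\eta$ and the $\mu$ term in $\dot\eta=\beta I\xi+(\beta S-\mu)\eta$, and the main obstacle is precisely propagating the positivity of $\eta$ from $\tau$ up to $T$ in a state-dependent fashion, so that the inequality $\mu<\vmax$ appears naturally from the variational flow rather than being postulated.
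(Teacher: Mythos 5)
Your treatment of the first two assertions is correct and essentially coincides with the paper's. The paper obtains the location of the switch as the case $\ad=0$ of the general linear-policy proof (\ref{dimgen}, point (iii)): at a switching time $\psi=-S\ls=0$ forces $\ls(\tau_s)=0$, the identity $\H\equiv0$ gives $\li(\tau_s)=-\dot I(\tau_s)^{-1}$, and $\dot\psi(\tau_s)=\beta SI\,\li(\tau_s)$ has sign opposite to $\dot I(\tau_s)$; since only downward crossings ($0\to\vmax$) are admissible, the switch must precede the peak. Your argument is the same computation, and the $\R_0<1$ case ($t_p=0$, hence no admissible switch) is also identical to the paper's.

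The gap is in the $\vmax>\mu$ branch. Your sensitivity plan is not the paper's route, and as written it stops exactly where the hypothesis must do its work: you need $\eta(T)>0$ for every $\tau>0$, but you concede you do not know how to propagate the sign of $\eta$ through the variational system $\dot\xi=-(\beta I+\vmax)\xi-\beta S\eta$, $\dot\eta=\beta I\xi+(\beta S-\mu)\eta$, whose coefficient $\beta S-\mu$ changes sign along the trajectory. Note also a structural warning sign: since the variational flow and the adjoint flow are dual, $\ls\xi+\li\eta$ is constant on any interval where $u$ is constant; along a putative PMP extremal with a switch one has $\ls(\tau_s)=0=\ls(T)$, which forces $\eta(T)=0$ there, so $T'(\tau_s)=0$ exactly at the configurations you are trying to exclude. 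Your strategy can therefore only succeed by showing that \emph{no} such critical configuration exists when $\vmax>\mu$, which is the whole content of the claim --- the variational reformulation does not by itself produce the threshold. The paper instead argues entirely in the adjoint variables: assuming a delayed extremal, it shows $\psi$ has a unique interior minimum $m_\psi$ where $\li=0<\ls$, that $\ls$ has a unique interior maximum $M_{\ls}$ where $\li>\ls$, that $m_\psi<M_{\ls}$ (because $\dls(m_\psi)>0$), and hence that there is a crossing point $\sigma\in(m_\psi,M_{\ls})$ with $\ls(\sigma)=\li(\sigma)>0$ and $\dli(\sigma)>\dls(\sigma)$; evaluating the adjoint equations at $\sigma$ turns this last inequality into $\mu\li(\sigma)>\vmax\ls(\sigma)$, i.e.\ $\mu>\vmax$, so $\vmax>\mu$ excludes any delayed extremal. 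You should either adopt this adjoint-comparison argument or supply the missing positivity propagation; as it stands the third assertion is not proved.
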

\begin{proof} See \ref{propvacc}. \end{proof}

\begin{figure}
\centering
\includegraphics[width=.8\columnwidth]{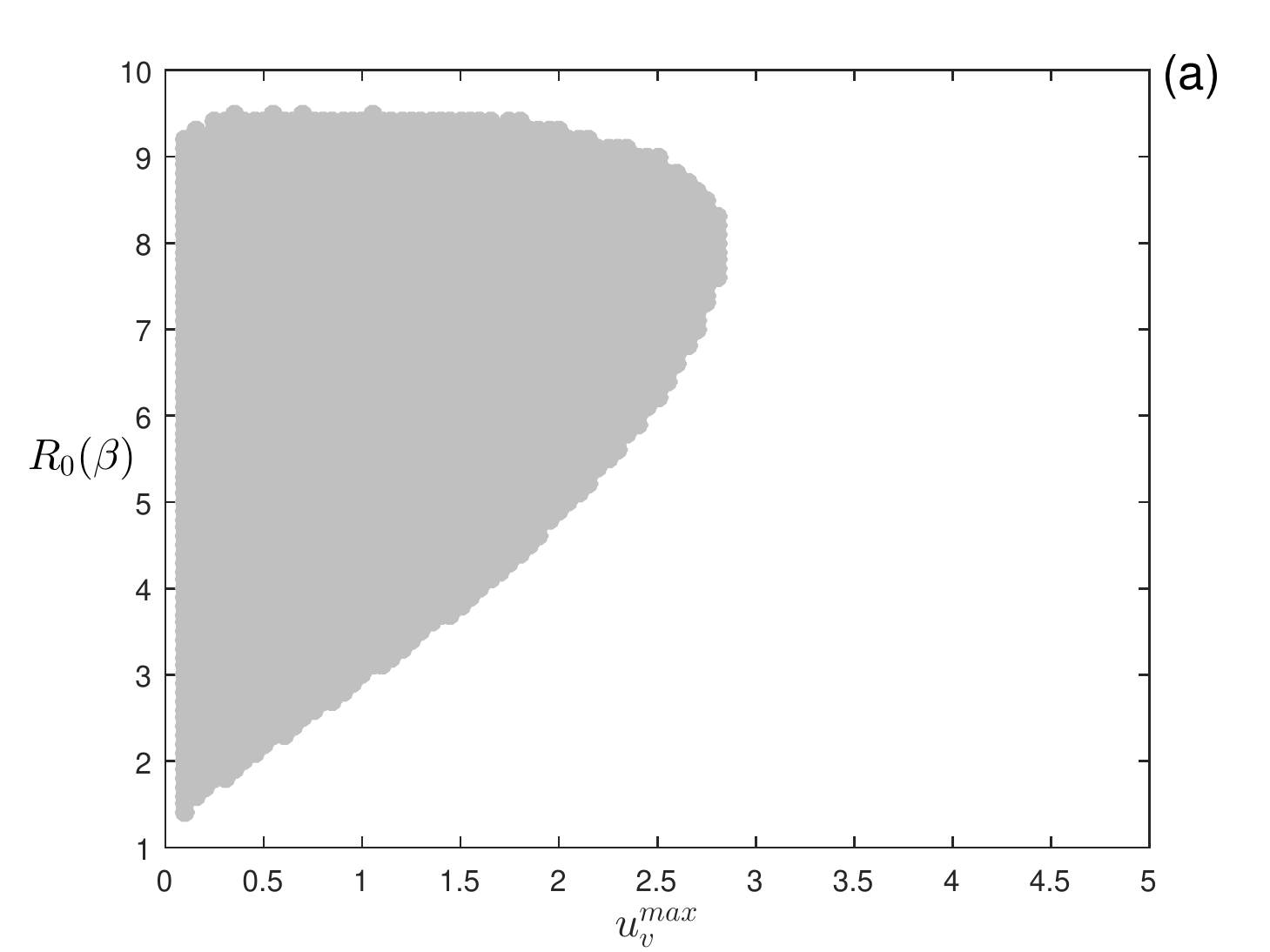}
\includegraphics[width=.8\columnwidth]{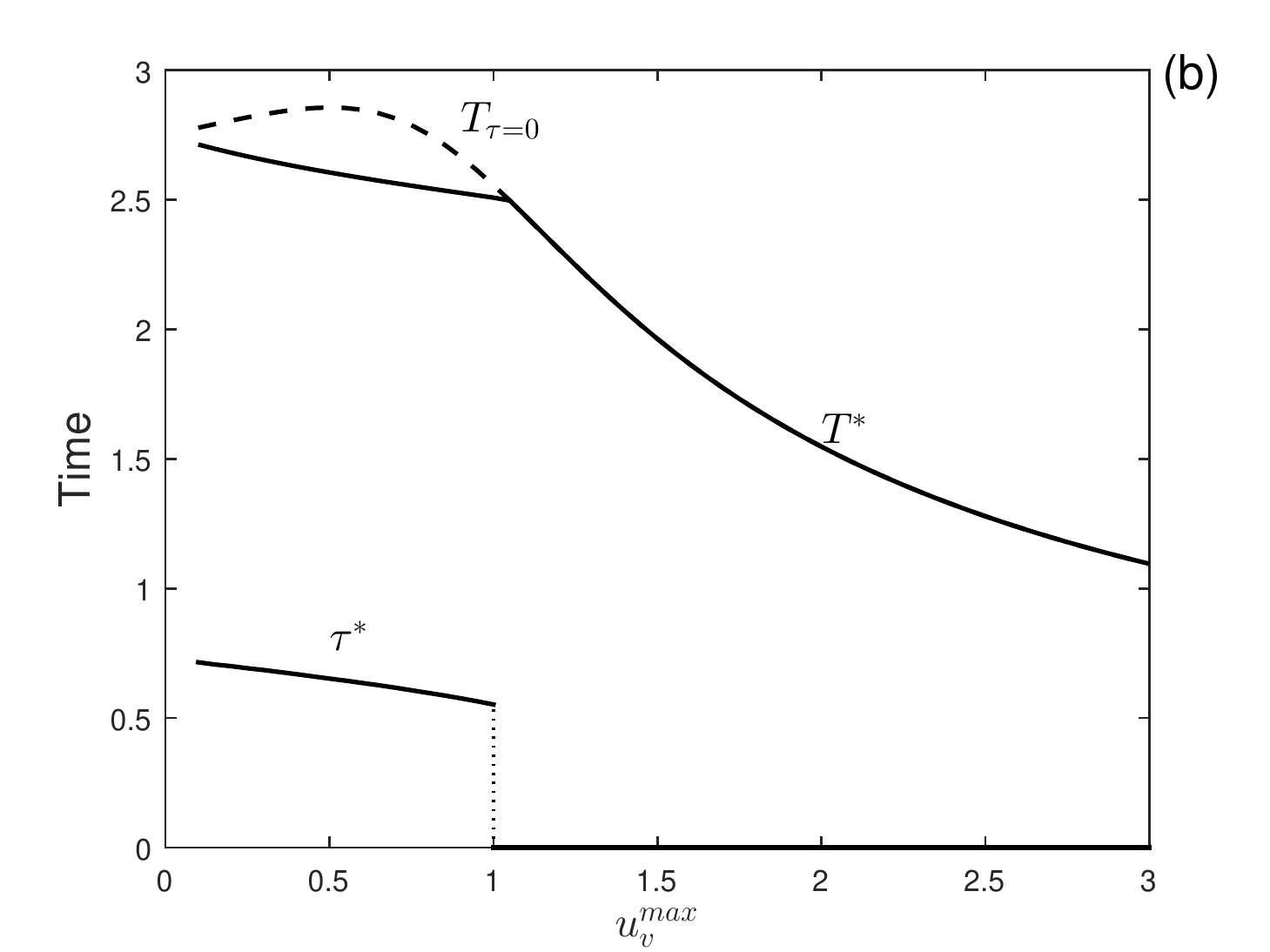}
\caption{Numerical analysis of the optimal vaccination problem. (a) Different colors represent different optimal control types obtained by varying $\vmax$ (that ranges from 0 to $\mu$) and $\R_0(\beta)$. Color meanings are specified in Fig. \ref{fig:legenda}. (b) Plot of the optimal \interv time $\tau^*$, the optimal eradication time $T^*$, and the eradication time $\Tumax$ as functions of $\vmax$, with $\R_0(\beta)=3$. Other parameters: $S(0)=2000$, $\mu=5$, $I(0)=1$, $\varepsilon=0.5$. }
\label{fig:v}
\end{figure}

The numerical analyses on the time-optimal vaccination problem are illustrated in Fig. \ref{fig:v}. In Fig. \ref{fig:v}(a), we show the results of the simulations performed in the parameter space [$\vmax$, $\R_0(\beta)$]. As explained in the color codes in Fig. \ref{fig:legenda}, the light gray and white regions in Fig. \ref{fig:v}(a) represent the combinations of parameters [$\vmax$, $\R_0(\beta)$] for which the time-optimal vaccination problem selects for delayed and constant control, respectively. As highlighted by the analytic results, Fig. \ref{fig:v}(a) displays that the switching time always occurs before the peak of infection and that higher vaccination efforts always select for a constant control. Fig. \ref{fig:v}(b) shows the optimal \interv time ($\tau^*$), the eradication time for the optimal vaccination strategy ($T^*$, solid curve), and the eradication time for the constant vaccination ($\Tumax$, dashed curve) as functions of the maximum effort, $\vmax$. In Fig. \ref{fig:v}(b), we notice that the optimal \interv time undergoes a ``catastrophic'' transition (\textit{sensu} \citep{thom72}) from delayed to constant control for increasing values of $\vmax$. Then, small changes in $\vmax$ can cause an abrupt change in the starting point of the optimal vaccination campaign. On the other hand, Fig. \ref{fig:v}(b) shows that, when delaying the onset of vaccination is optimal, the differences in the final time of the epidemic between optimal control and constant control (i.e. variation in the objective function) is marginal. 

\subsection{Isolation}
We consider SIR model \eqref{SI} with isolation control, denoted by $\uq(t)$, obtaining an optimal control problem as the one defined in \eqref{TOC}, with $g_i(\x)$ as in \eqref{gq}.
The control reproduction number for isolation is defined as $\R_C^i=\beta S(0)/(\mu+\qmax)$.

The numerical analyses on the time-optimal isolation problem are illustrated in Fig. \ref{fig:q}.
In Fig. \ref{fig:q}(a), we show the results of the simulations performed in the parameter space [$\qmax$, $\R_0(\beta)$]. Conversely to vaccination, our results show that the time-optimal isolation problem can select for delayed strategies also for high values of maximum effort, $\qmax$. Moreover, the switching time for the optimal isolation strategy can occur after the peak of infection (see the dark gray region in Fig. \ref{fig:q}(a)). In Fig. \ref{fig:q}(b), we show that the isolation problem selects for optimal delayed control in a wide range of parameter settings also when the number of infected individuals firstly introduced in the population increases (i.e. $I(0)>1$). Fig. \ref{fig:q}(c) displays the optimal \interv time ($\tau^*$), the final time for the optimal isolation strategy ($T^*$, solid curve), and the final time for the constant isolation ($\Tumax$, dashed curve) as functions of the maximum effort, $\qmax$. As in the vaccination problem, the optimal \interv time for isolation undergoes a ``catastrophic'' transition from delayed to constant control for increasing values of maximum effort. Fig. \ref{fig:q}(c) shows that delayed control can be optimal also when $\R_C < 1$, i.e. when an prompt intervention at $t = 0$ could have implied an immediate decline in the number of infected individuals.
In addition, when delaying the onset of isolation is optimal, the differences in the final time of the epidemic between optimal control and constant control can be significant. Fig. \ref{fig:q}(d) shows the number of susceptible individuals at the end of the epidemic for the optimal isolation strategy ($S(T^*)$, solid curve) and the constant isolation ($S(\Tumax)$, dashed curve) as functions of the maximum effort, $\qmax$. Similarly to the switching time, $S(T^*)$ exhibits a discontinuous increase at the boundary between delayed and constant control.

\begin{sidewaysfigure}
\centering
\includegraphics[width=.4\columnwidth]{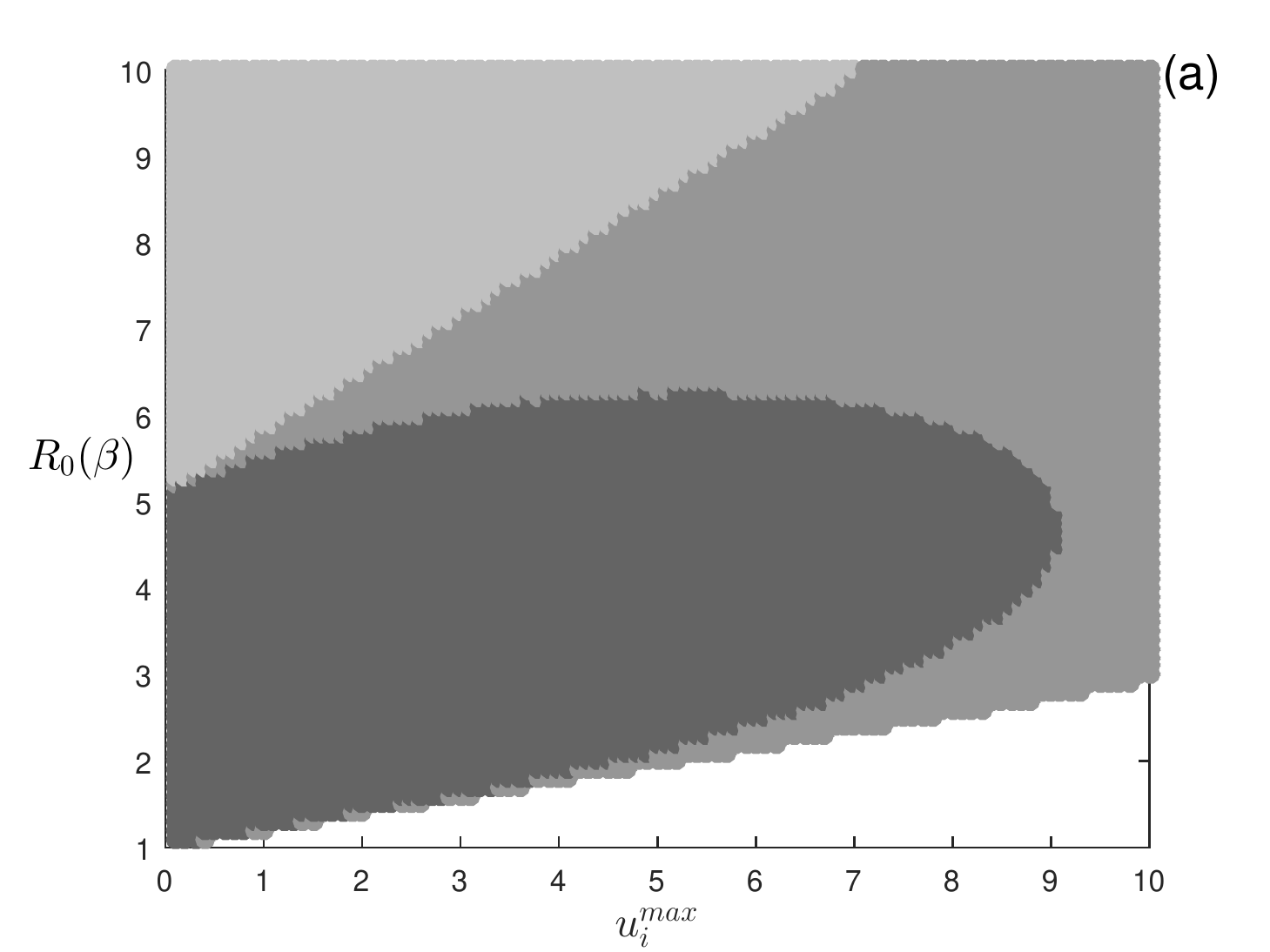}
\includegraphics[width=.4\columnwidth]{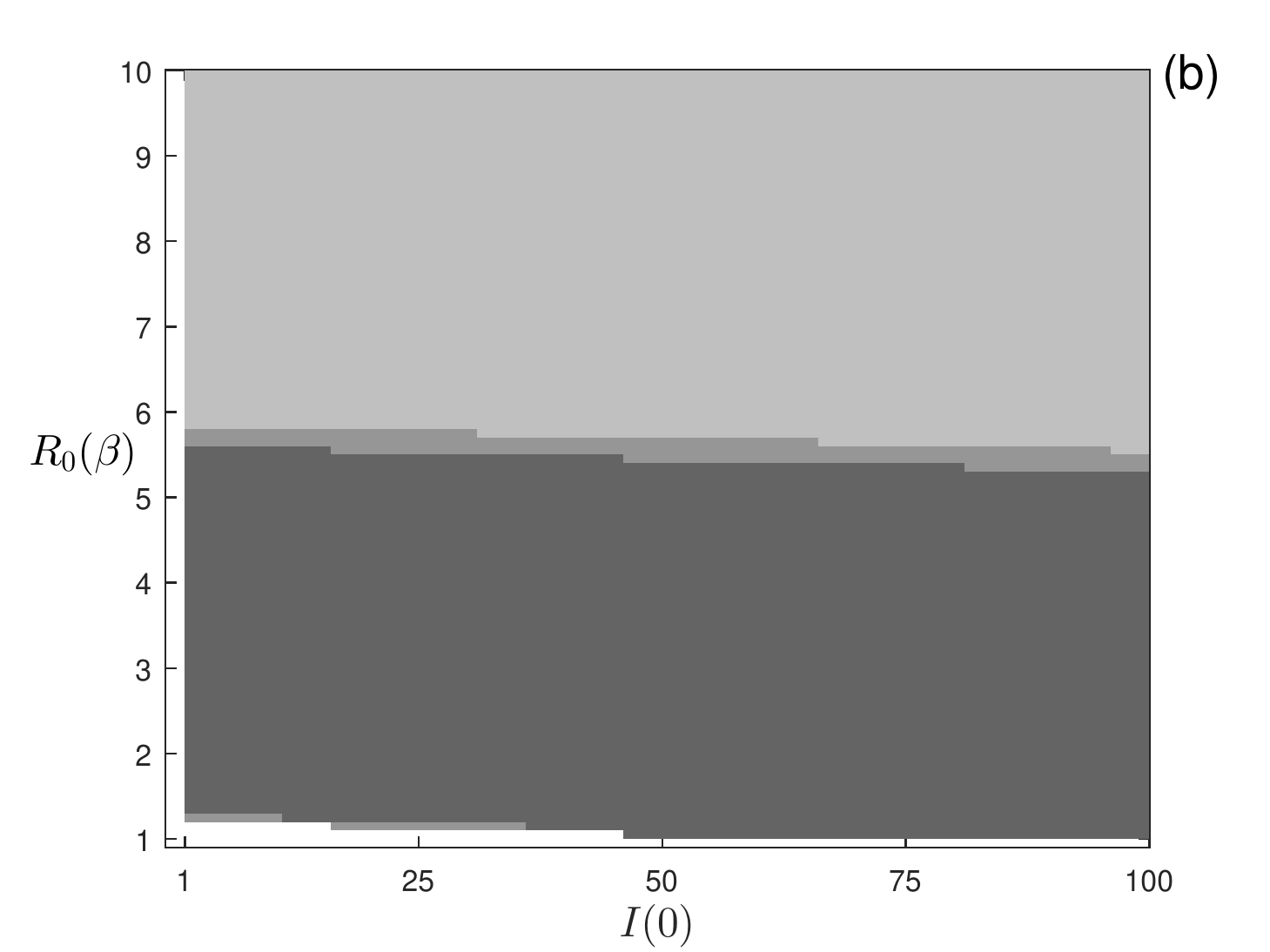}\\
\includegraphics[width=.4\columnwidth]{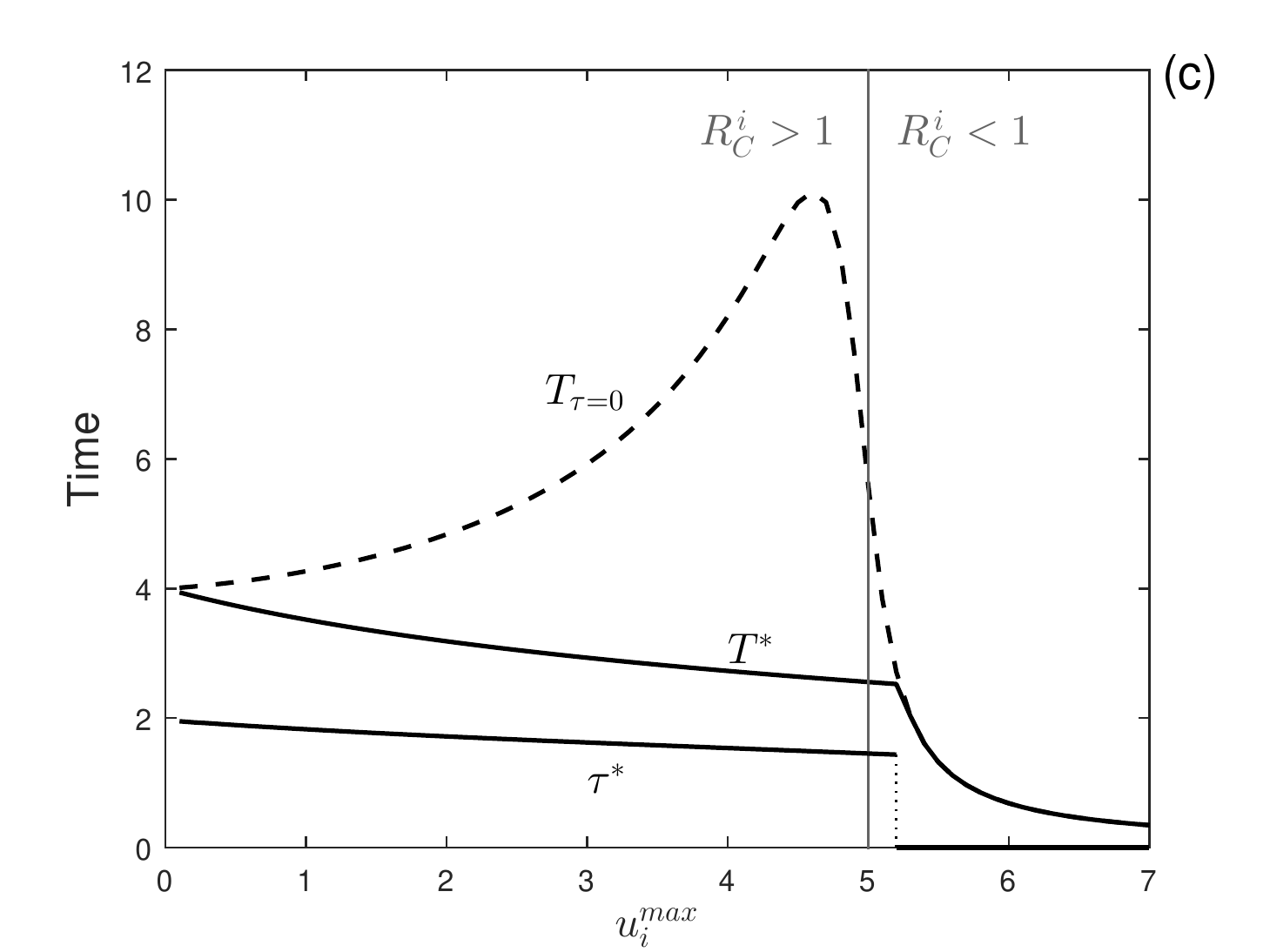}
\includegraphics[width=.4\columnwidth]{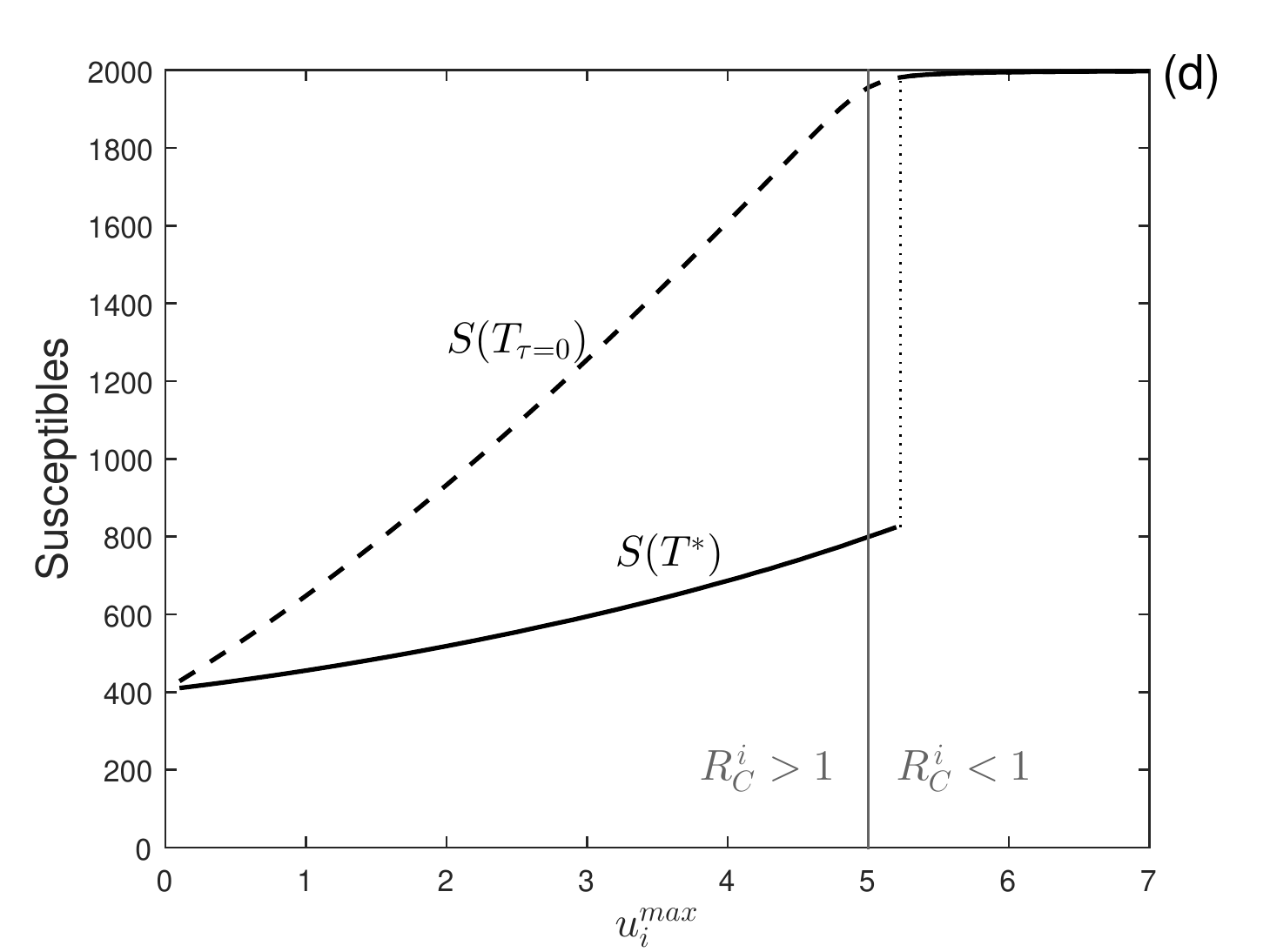}
\caption{Numerical analysis of the optimal isolation problem. Different colors represent different optimal control types obtained by varying: (a) $\qmax$ and $\R_0(\beta)$; (b) $I(0)$ and $\R_0(\beta)$. Color meanings are specified in Fig. \ref{fig:legenda}. (c) Plot of the optimal \interv time $\tau^*$, the optimal eradication time $T^*$, and the eradication time $\Tumax$ as functions of $\qmax$, with $\R_0(\beta)=2$. (d) Number of susceptible individuals at the end of the epidemic obtained using the optimal control, $S(T^*)$, and the constant control $u(t)=\qmax$, $S(\Tumax)$, as functions of $\qmax$, with $\R_0(\beta)=2$. In panel (b) $\qmax = 1$. In panels (c)-(d) is also highlighted the value of $\qmax$ for which $\R^i_C=1$ (in gray). Other parameter values as in Fig. \ref{fig:v}.}
\label{fig:q}
\end{sidewaysfigure} 

\subsection{Culling}
We consider the culling control, denoted by $\uc(t)$, in the optimal control problem defined in \eqref{TOC}, with $g_c(\x)$ as in \eqref{gc}. The control reproduction number for culling is defined as $\R_C^c=\beta S(0)/(\mu+\cmax)$.

\begin{figure}
\centering
\includegraphics[width=.7\columnwidth]{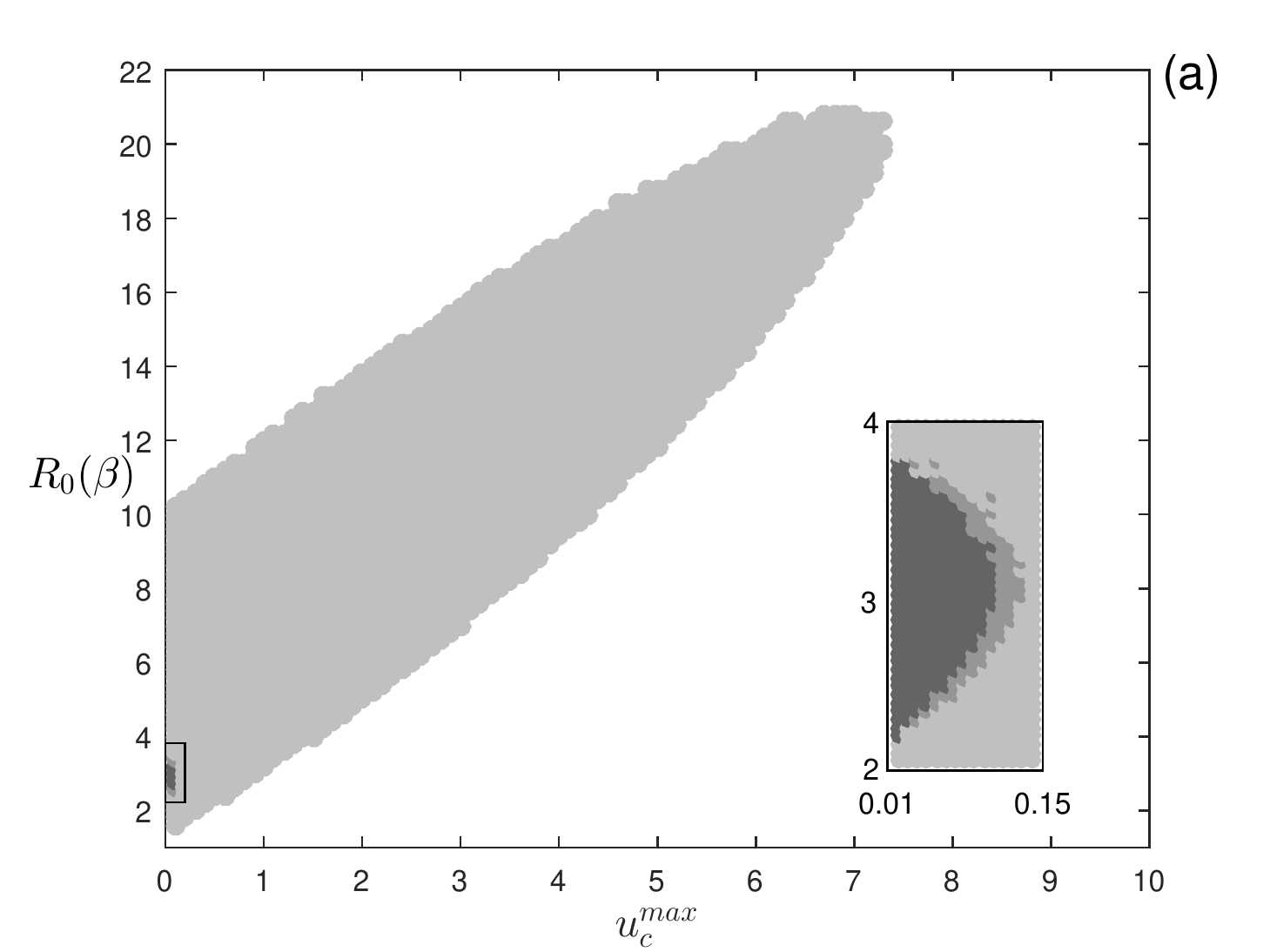}\\
\includegraphics[width=.7\columnwidth]{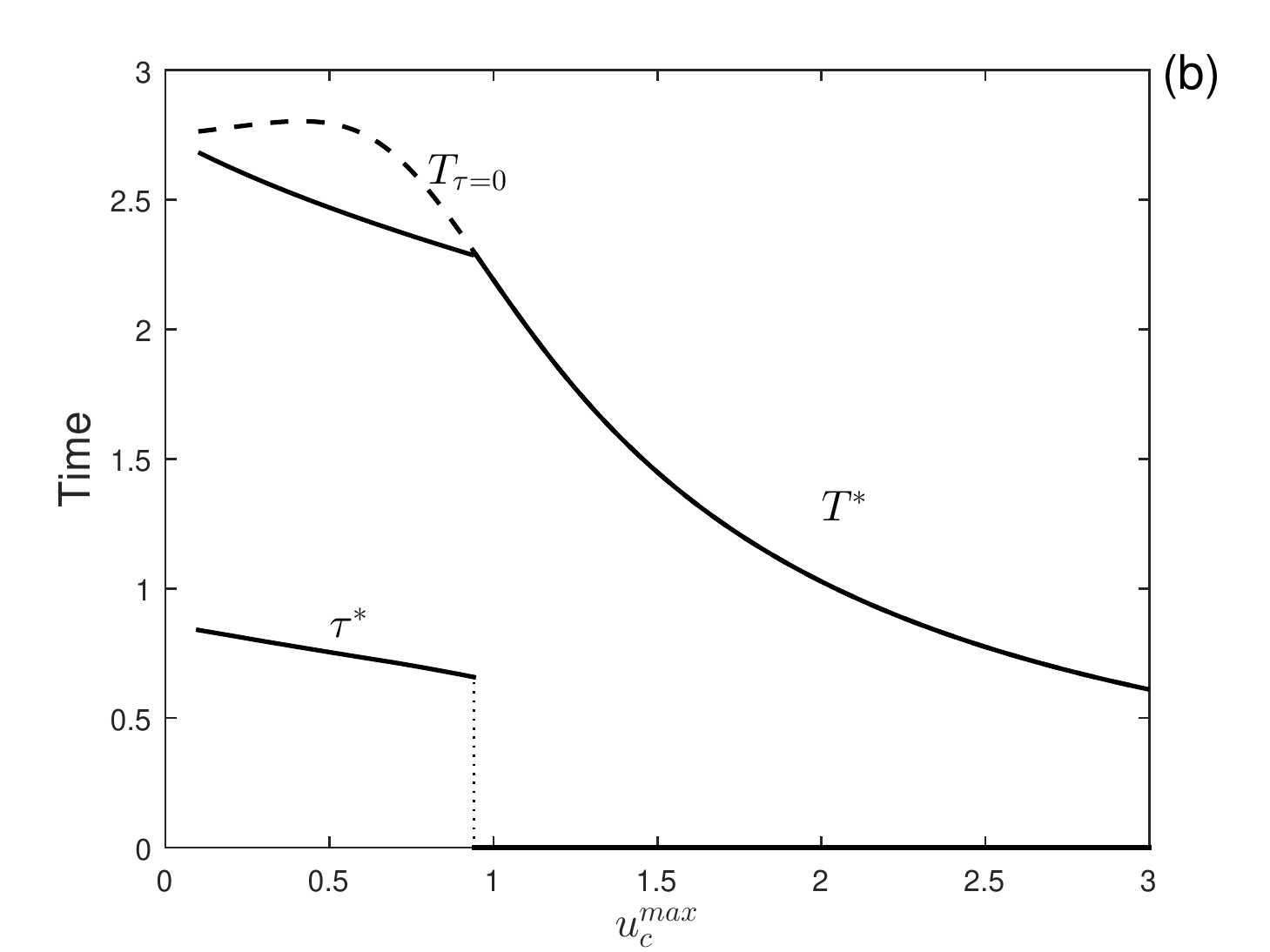}
\caption{Numerical analysis of the optimal culling problem.  (a) Different colors represent different optimal control types obtained by varying $\cmax$ and $\R_0(\beta)$. Color meanings are specified in Fig. \ref{fig:legenda}. (b) Plot of the optimal \interv time $\tau^*$, the optimal eradication time $T^*$, and the eradication time $\Tumax$ as functions of $\cmax$, with $\R_0(\beta)=3$. Other Parameter values as in Fig. \ref{fig:v}.}
\label{fig:c}
\end{figure}

The numerical analyses on the time-optimal culling problem are illustrated in Fig. \ref{fig:c}. In Fig. \ref{fig:c}(a), we show the results of the simulations performed in the parameter space [$\cmax$, $\R_0(\beta)$]. We display that, when $\R_0$ is low, delayed control is selected for small values of culling effort ($\cmax$), while, when $\R_0$ is high, delayed control is selected for intermediate values of $\cmax$. In addition, in the aforementioned cases, the starting of the optimal culling generally occurs before the peak of infection (light gray region in Fig. \ref{fig:c}(a)). However, we can notice that there exists a small region in the parameter space [$\cmax$, $\R_0(\beta)$] where the starting of the optimal strategy can occur after the peak of infection (see the dark gray region in the box). Fig. \ref{fig:c}(b) shows the optimal \interv time ($\tau^*$), the final time for the optimal culling strategy ($T^*$, solid curve), and the final time for the constant culling ($\Tumax$, dashed curve) as functions of the maximum effort, $\cmax$. Also in this case the optimal \interv  time undergoes a ``catastrophic'' transition from delayed to constant control for increasing values of $\cmax$ and, when delaying the onset of culling is optimal, the differences in the final time between optimal control and constant control is marginal, analogously to the case of vaccination. 

\section{Reduction of transmission policy}
We consider SIR model \eqref{SI} with reduction of transmission control, denoted by $\ur(t)$, obtaining an optimal control problem as the one defined in \eqref{TOC}, with $g_r(\x)$ as in \eqref{gr} and $0<\rmax\leq1$. The control reproduction number for reduction of transmission is defined as $\R_C^r= \beta(1-\rmax)/\mu$. Despite the nonlinearity of this kind of policy, it is possible to find the same type of optimal strategy of the linear term policies.

\begin{teo}  If $\ur^*$ is the optimal control strategy for the reduction of transmission problem, then $\ur^*$ is a bang-bang control with at most one switching time $\tau_s^*$ from no control to maximum control.
\label{teo:rid}\end{teo}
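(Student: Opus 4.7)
The plan is to adapt the four-step scheme used for the linear term policies (Theorem~\ref{teo:VIC}) to the nonlinear vector field $g_r$. Starting from Theorem~\ref{PMP}, I would write the Hamiltonian as
\[
\H = 1 - \beta S I\,\lambda_S + (\beta S I - \mu I)\,\lambda_I - \ur\, \beta S I\,(\lambda_S + \lambda_I),
\]
so that the switching function factorises as $\psi(\x,\llambda) = \llambda^{\sf T} g_r(\x) = -\beta S I\,\phi$ with $\phi := \lambda_S + \lambda_I$. Since $S(t) > 0$ and $I(t) \ge \varepsilon > 0$ on $[0, T^*]$, the sign of $\psi$ is exactly opposite to the sign of $\phi$, so the entire bang-bang structure of $\ur^*$ is encoded in the sign changes of $\phi$ along the optimal adjoint trajectory.

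Next I would pin down the terminal behaviour and exclude singular arcs. The transversality condition $\lambda_S^*(T^*) = 0$, combined with $\H^*(T^*) = 0$, $I^*(T^*) = \varepsilon$, and the property $\dot I^*(T^*) < 0$ (from the definition of eradication time), yields $\lambda_I^*(T^*) > 0$ and hence $\phi(T^*) > 0$; thus $\ur^* \equiv \rmax$ on a left neighbourhood of $T^*$. If instead $\psi$ vanished identically on an open subinterval, then $\phi \equiv 0$ together with $\H = 0$ would force $\lambda_I^* = 1/[I(\mu - 2\beta S)]$ on that subinterval, and $\dot\phi \equiv 0$ would reduce to the algebraic identity $\mu = 2\beta(S + I)$; differentiating this relation along the state dynamics gives $0 = -\mu I$, a contradiction. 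Hence $\ur^*$ is strictly bang-bang.

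For the single-switch property I would use the identity $\dot\psi = \llambda^{\sf T}[f, g_r]$ along the Hamiltonian flow, obtaining
\[
[f, g_r](\x) = \beta S I \begin{pmatrix} \mu - 2\beta S \\ 2\beta I \end{pmatrix}, \qquad \dot\psi\big|_{\psi=0} = \frac{\beta S\,[2\beta(S + I) - \mu]}{\mu - 2\beta S},
\]
after substituting the value of $\lambda_I^*$ at $\phi = 0$. Arguing by contradiction, suppose there were two switching times $\tau_1 < \tau_2 < T^*$: the alternating sign of $\phi$ would force $\dot\psi(\tau_1) > 0$ and $\dot\psi(\tau_2) < 0$, which translate into $2\beta S(\tau_1) < \mu < 2\beta(S(\tau_1) + I(\tau_1))$ and, since $\ur^* \equiv 0$ on $(\tau_1, \tau_2)$ makes $S$ strictly decreasing, into $\mu > 2\beta(S(\tau_2) + I(\tau_2))$. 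Combining these endpoint conditions with the uncontrolled SIR monotonicities of $S$, $I$ and $S + I$ on $(\tau_1, \tau_2)$, together with the linear ODE for $\phi$ obtained from the adjoint system at $u = 0$, should preclude the required negative excursion of $\phi$ with $\phi(\tau_1) = \phi(\tau_2) = 0$.

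The main obstacle is precisely this last monotonicity step. In the linear-term case of Theorem~\ref{teo:VIC}, $g_l$ is diagonal in the state and the sign of a single Lie bracket immediately rules out a second zero of $\psi$; here the factor $S I$ in $g_r$ couples the two state coordinates and makes $\dot\psi|_{\psi=0}$ depend rationally on both $S$ and $S + I$, so the conclusion cannot be read off from a pointwise sign computation alone. I expect the technical core of the proof to consist in combining the uncontrolled SIR monotonicities on the intermediate arc with the coupled adjoint ODE for $(\phi,\,\lambda_I^* - \lambda_S^*)$ to rule out an interior minimum of $\phi$ compatible with both endpoint conditions $\phi(\tau_1) = \phi(\tau_2) = 0$.
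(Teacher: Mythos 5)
Your proof is built on the wrong vector field, and as a consequence the decisive step is missing. You read \eqref{gr} literally and take $g_r=(-\beta SI,-\beta SI)^{\sf T}$, which gives $\dot S=-(1+\ur)\beta SI$; but the system the theorem is about is the one with transmission rate $\beta(1-\ur)$, i.e.\ $\dot S=-\beta(1-\ur)SI$, $\dot I=\beta(1-\ur)SI-\mu I$ (this is forced by the name of the policy, by $\R_C^r=\beta(1-\rmax)/\mu$, and by the conservation law $\dot S+\dot I=-\mu I$; the first component of \eqref{gr} carries a sign typo). With the correct $g_r=(+\beta SI,-\beta SI)^{\sf T}$ the switching function is $\psi=(\ls-\li)\beta SI$ rather than $-\beta SI(\ls+\li)$, and a short computation gives $\dot\psi=-\mu\beta SI\,\ls$. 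At any zero of $\psi$ one has $\ls=\li$, and then $\H=0$ forces $\ls=\li=(\mu I)^{-1}>0$, hence $\dot\psi=-\beta S<0$ at \emph{every} switching time. A $\mathcal{C}^1$ function whose derivative is strictly negative at each of its (isolated) zeros has at most one zero, and that zero is a downcrossing; combined with $\psi(T)<0$ (which you do obtain correctly from transversality and $\H(T)=0$), this yields at most one switch, necessarily $0\to\rmax$. That is the entire proof.

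By contrast, in your formulation $\dot\psi|_{\psi=0}=\beta S\,[2\beta(S+I)-\mu]/(\mu-2\beta S)$ has no definite sign, and you explicitly leave the exclusion of a second switching time as an unresolved ``obstacle'': the proposed combination of SIR monotonicities on the intermediate arc with the adjoint ODE for $\phi$ is a plan, not an argument. Since the single-switch property and the direction $0\to\rmax$ are precisely the content of the theorem, the proposal as written does not establish it. Your non-singularity and terminal-sign steps are sound (the paper's non-singularity argument is even shorter: $\psi=\dot\psi=0$ on an interval gives $\ls=\li=0$ directly from the two displayed formulas, contradicting $\llambda\neq0$), but they are the easy part.
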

\begin{proof} See \ref{dimR}. \end{proof}

\begin{sidewaysfigure}
\centering
\includegraphics[width=.4\columnwidth]{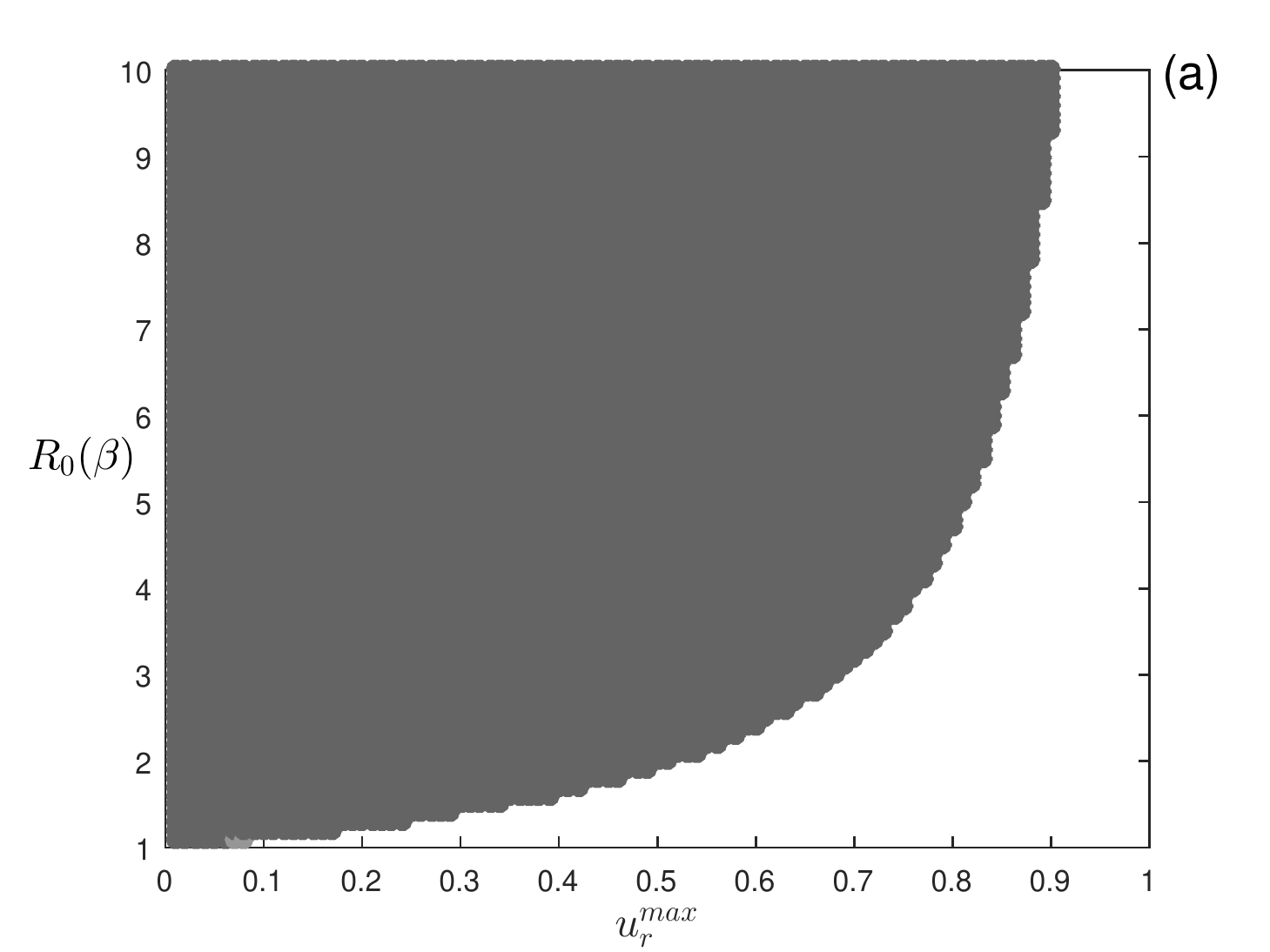}
\includegraphics[width=.4\columnwidth]{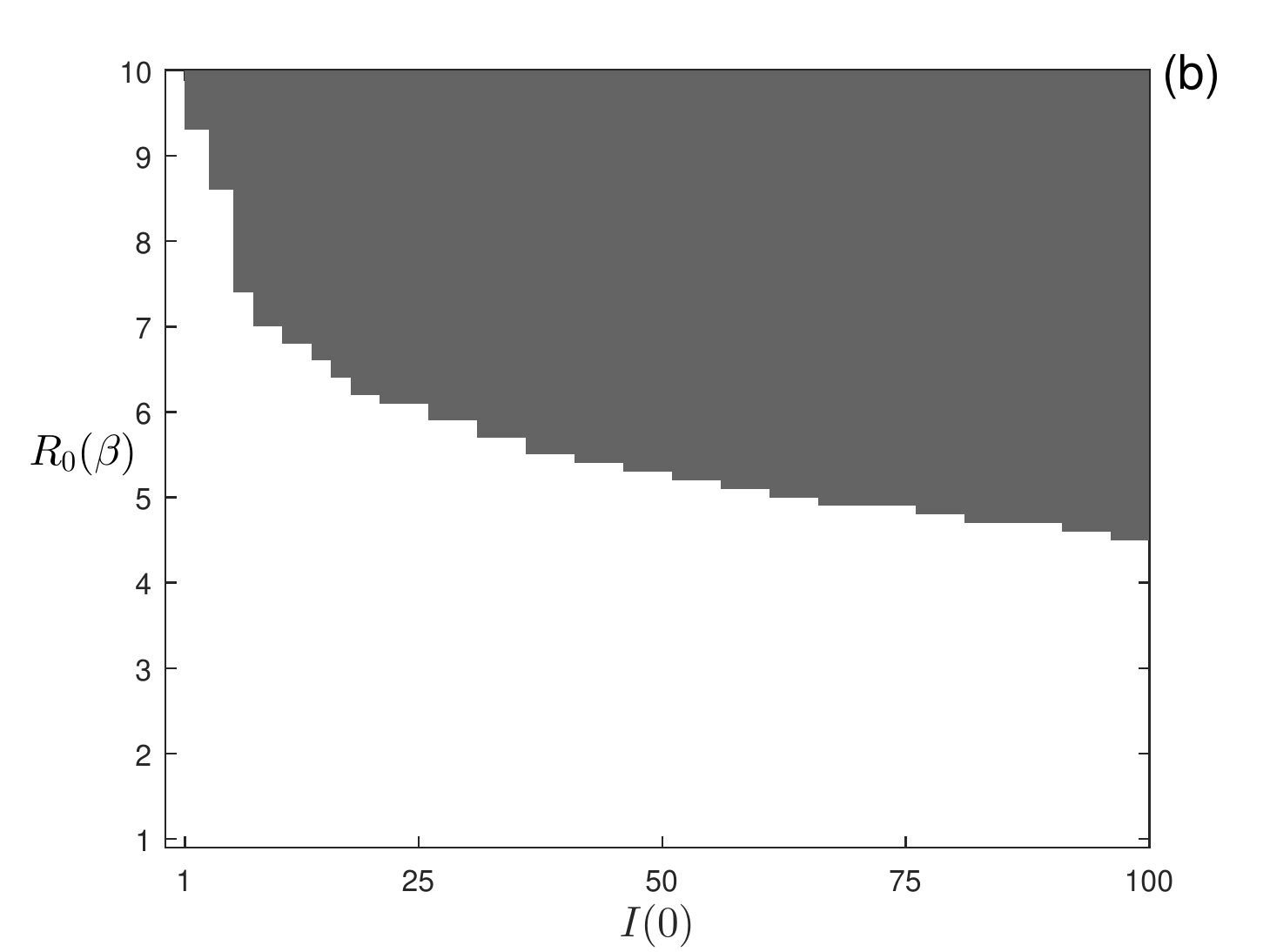}\\
\includegraphics[width=.4\columnwidth]{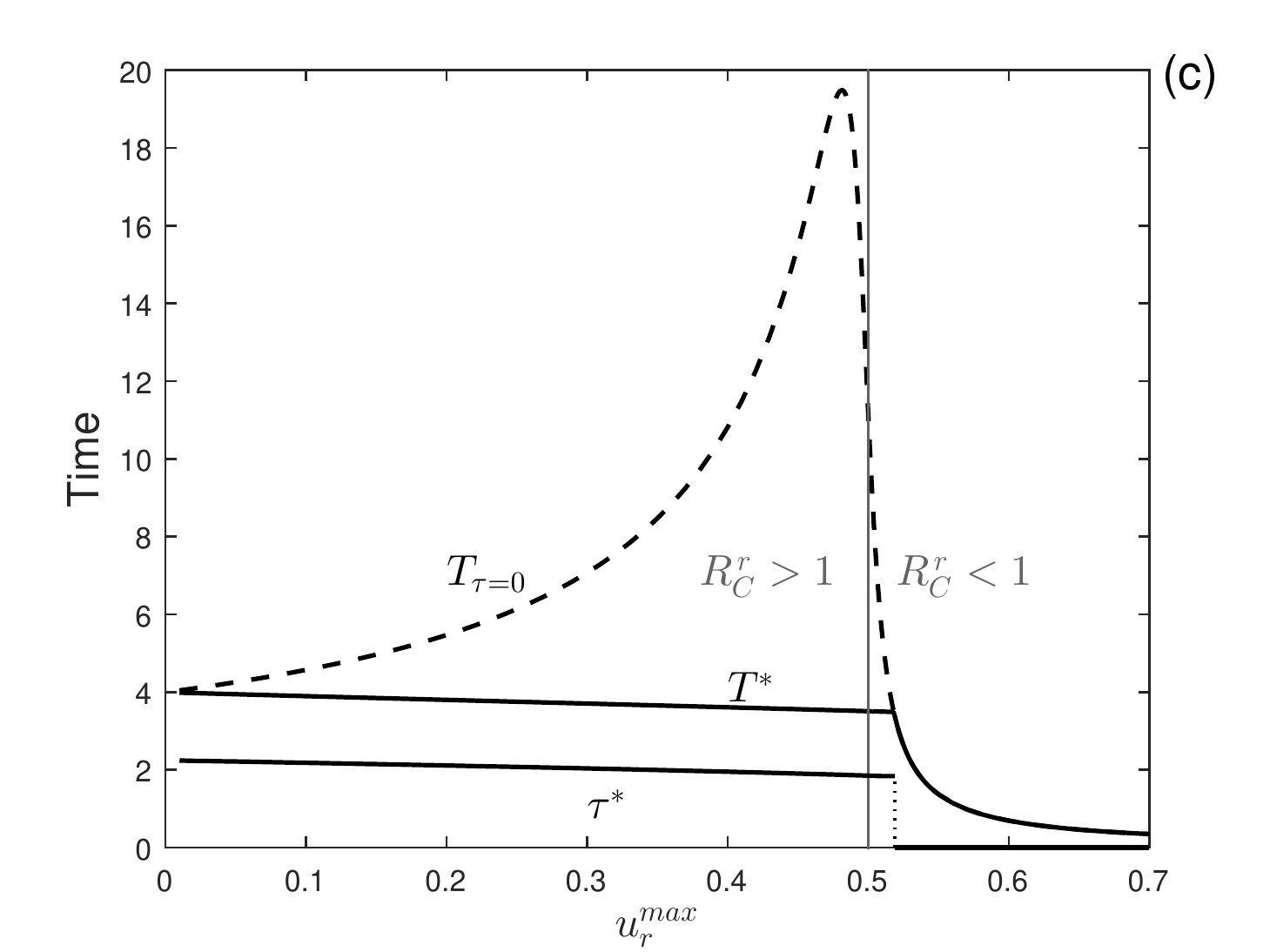}
\includegraphics[width=.4\columnwidth]{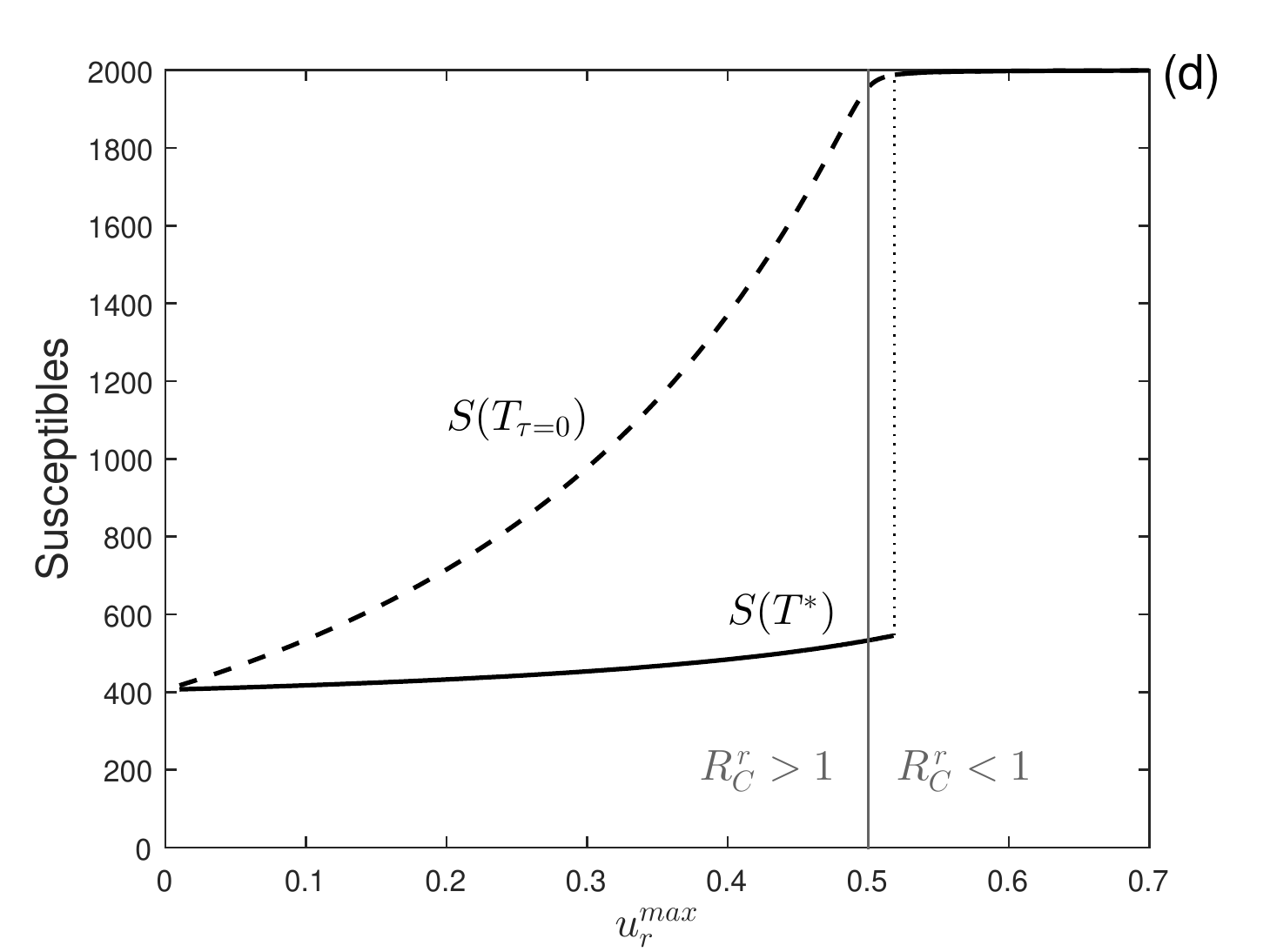}
\caption{Numerical analysis of the optimal reduction of transmission problem. Different colors represent different optimal control types obtained by varying: (a) $\rmax$ and $\R_0(\beta)$; (b) $I(0)$ and $\R_0(\beta)$. Color meanings are specified in Fig. \ref{fig:legenda}. (c) Plot of the optimal \interv time $\tau^*$, the optimal eradication time $T^*$, and the eradication time $\Tumax$ as functions of $\rmax$, with $\R_0(\beta)=2$. (d) Number of susceptible individuals at the end of the epidemic obtained using the optimal control, $S(T^*)$, and the constant control $u(t)=\rmax$, $S(\Tumax)$, as functions of $\qmax$, with $\R_0(\beta)=2$. In panel (b) $\rmax = 0.9$. In panels (c)-(d) is also highlighted the value of $\rmax$ for which $\R^r_C$=1 (in gray). Other parameter values as in Fig. \ref{fig:v}.}
\label{fig:r}
\end{sidewaysfigure}

The numerical analyses on the time-optimal reduction of transmission problem are illustrated in Fig. \ref{fig:r}. In Fig. \ref{fig:r}(a), we show the results of the simulations performed in the parameter space [$\rmax$, $\R_0(\beta)$]. We display that, when delayed control is selected, the starting of the optimal reduction of transmission generally occurs after the peak of infection (dark gray region in Fig. \ref{fig:r}(a)). In Fig. \ref{fig:r}(b), we show that the reduction of transmission problem selects for optimal delayed control in a wide range of parameter settings also when the number of infected individuals firstly introduced in the population increases (i.e. $I(0)>1$). Fig. \ref{fig:r}(c) shows the optimal \interv time ($\tau^*$), the final time for the optimal reduction of transmission strategy ($T^*$, solid curve), and the final time for the constant reduction of transmission ($\Tumax$, dashed curve) as functions of the maximum effort, $\rmax$. Similarly to the isolation problem, we find that: (\textit{i}) delayed control for reduction of transmission can be optimal also when $\R_C < 1$; and (\textit{ii}) when delaying the starting of reduction of transmission is optimal, the differences in the final time of the epidemic between optimal control and constant control can be significant. Fig. \ref{fig:r}(d) shows the number of susceptible individuals at the end of the epidemic for the optimal reduction of transmission strategy ($S(T^*)$, solid curve) and the constant reduction of transmission ($S(\Tumax)$, dashed curve) as functions of the maximum effort, $\rmax$. Similarly to the isolation problem, $S(T^*)$ exhibits a discontinuous increase at the boundary between delayed and constant control. 

\section{Discussion and conclusions}
In this work, we investigated the problem of minimizing the epidemic duration by using different control policies. Specifically, we characterized analytically the time-optimal control strategies for prophylactic vaccination, isolation, non-selective culling, and reduction of transmission by using a family of simple SIR models in an optimal control framework \citep{pontryagin}. Our analyses led to the non-trivial result that, even in the unconstrained optimal control problem (i.e. without costs of control or resource limitations), using the maximal effort for the entire epidemic period may not be the optimal strategy to minimize the epidemic duration. In addition, we found that, when applying the maximal effort for the entire epidemic is sub-optimal, then a delayed control represents the optimal strategy in all the cases investigated. We even found that the optimal amount of delay applied to the control may be sufficiently large to postpone the beginning of the intervention after the peak of the infection (see Fig. \ref{fig:legenda} and dark gray regions in Figs. \ref{fig:q}(a), \ref{fig:c}(a), and \ref{fig:r}(a)). In addition, we showed that the delayed control may represent the optimal strategy for minimizing the epidemic duration even when a prompt intervention could immediately reduce the number of infected individuals (i.e. reduce $\R_C$ below 1, see Figs. \ref{fig:q}(c) and \ref{fig:r}(c)).

The biological explanation for the optimality of delayed controls relies on the remark that, at the beginning of the epidemic, the infection process can be more efficient in depleting the reservoir of susceptibles (which represents the mechanism leading to epidemic extinctions) than the applied control.
In other words, reducing via external interventions the number of individuals involved in the infection process at the beginning of the outbreak (especially the infected ones) may lead to slower epidemic dynamics, which implies longer times for the epidemic to go extinct.
Two evidences support this explanation: (1) delayed control is generally optimal when the effectiveness of the control is low (i.e. low $u^{max}$); and (2) isolation and reduction of transmission policies (which do not reduce directly the number of susceptibles) tend to select for delayed control in wider ranges of parameter settings than vaccination and culling.

Our results differ from those previously obtained for the time-optimal problem in specific epidemic contexts. By analysing a subsystem of an epidemic model describing SARS spread, Jiang \citep{jiang07} proved that, according to Pontryagin's Minimum Principle, maximizing the isolation effort for the entire epidemic period would reduce epidemics in minimum time. Similarly, by numerically testing scenarios in an SIR model where the control always reduces the disease reproduction number below 1, Iacoviello \& Liuzzi \citep{iacoviello08} showed that maximizing the combined vaccination and isolation efforts for the entire epidemic period eradicates epidemics in minimum time.

Our results substantially differ also from those obtained minimizing the total number of infected (or the infectious burden) in SIR epidemic models. By characterizing optimal controls according to Pontryagin's Minimum Principle, different works showed that the unconstrained problems for isolation \citep{wickwire75, hansen11JOMB}, vaccination \citep{morton74, hansen11JOMB}, and culling \citep{bolzoni14} only support the trivial solution of applying the maximal effort for the entire epidemic. Then, from our results it follows that the infectious burden may not be minimized while minimizing the epidemic duration in simple SIR models.

Minimizing the infectious burden in the optimal control problem for isolation and reduction of transmission is equivalent to maximize the final number of susceptibles, $S(T)$. Some examples of the tension between minimizing the epidemic duration and the infectious burden can be observed in Figs. 3 and 5. In particular, Figs. \ref{fig:q}(c) and \ref{fig:r}(c) display the eradication time, $T$, and Figs. \ref{fig:q}(d) and \ref{fig:r}(d) display the number of susceptible individuals at the end of the epidemic, $S(T)$, as functions of $u^{max}$ for both the time-optimal control and the constant control (corresponding to the optimal solution for the unconstrained problem of infectious burden minimization). From these figures, we notice that the different objective functions provide similar results when the control efforts are sufficiently large to rapidly lead the epidemic to extinction (high $u^{max}$), while they provide substantially different results in the case of less efficient strategies (low $u^{max}$). Specifically, the time-optimal control strategy performs poorly in minimizing the infectious burden at the boundary between delayed and constant control (see Figs. \ref{fig:q}(d) and \ref{fig:r}(d)), while the infectious burden minimization strategy performs poorly in minimizing the epidemic duration for slightly higher values of $\R_C$ (see the peak of $\Tumax$ in Figs. \ref{fig:q}(c) and \ref{fig:r}(c)).

Moreover, we find that small changes in the control parameter $u^{max}$ can cause large changes in the shape of the optimal strategies. An analogous result was found by Hansen \& Day \citep{hansen11JOMB} investigating the problem of minimizing the infectious burden through isolation in a SIR framework with limited resources. Hansen \& Day \citep{hansen11JOMB} also found that a ``catastrophic'' shift in the shape of the isolation strategy corresponds to an abrupt variation in the objective function (i.e. the infectious burden). Conversely, here we find that ``catastrophic'' shifts in the shape of the control strategies correspond to continuous variations in the objective functions (i.e. the final time of epidemics).

We believe our findings can be useful in throwing light on overlooked results obtained with more complex models developed in specific epidemiological contexts. For instance, Roche et al. \citep{roche15} investigated the performances of different spatially explicit models for the spread of foot-and-mouth disease in the UK farms, considering different control scenarios. Among other scenarios, they compared the effect of suppressive vaccination strategies started at 7 and 14 days after the outbreak beginning. They found that, in two out of the four models investigated, the medians and/or the $95^{th}$ percentiles of the epidemic duration decreased when the control is delayed by 7 days \citep[][see models `IS+' and `NL' in table 4 therein]{roche15}. On the other hand, they found that the number of infected farms always increases when the vaccination is delayed \citep[see table 4 in][]{roche15}. In a similar way, by investigating the effectiveness of combined culling and movement restriction to control classical swine fever in Switzerland pig farms, D\"{u}rr et al. \citep{durr13} found that delaying the starting of the control from 6 to 16 days after the outbreak beginning reduced the median outbreak duration in three out of the eight analysed scenarios \citep[see figure 4 in][]{durr13}.

Previous works have already shown that delayed control might represent an optimal strategy in some epidemiological applications. For instance, Handel et al. \citep{handel07} and Hansen \& Day \citep{hansen11PRSB} showed that delaying the controls may be optimal in preventing the re-emergence of the epidemic or the emergence of resistant epidemics. Bolzoni et al. \citep{bolzoni14} showed that the delayed control may be optimal in wildlife diseases where the host population growth is density-dependent.

The numerical analyses performed here under the assumption of constant control highlighted that increasing the control efforts may lead to a substantial increase of the eradication time. This is especially true in the case of isolation and reduction of transmission, where the eradication time may increase from two- to five-fold with respect to the ``do-nothing'' alternative (see Figs. \ref{fig:q}(c) and \ref{fig:r}(c)). Similar negative effects of constant efforts on disease control have also been highlighted when the target of the intervention was the reduction of the number of infected individuals \citep{choisy06, bolzoni07_plos, potapov12, bolzoni13}. All these counter-intuitive findings suggest that the implementation of simple time-dependent strategies may crucially improve the control of infectious diseases.

Other aspects of diseases control implementation that were not included in the present work -- such as combined controls \citep{hansen11JOMB}, the costs of control \citep{behncke00}, resources limitation \citep{hansen11JOMB}, and availability of surveillance information \citep{bolzoni07EDE} -- can play a significant role in shaping the optimal strategy. These aspects are essential in defining optimal protocols of intervention for diseases eradication. However, thanks to the generality of the model formulation, we believe our results can be used as a benchmark to contrast the outcome of future investigations. 

\section*{Acknowledgements}
The authors are very grateful to Stefano Pongolini for his invaluable suggestions and remarks that improved the manuscript. LB was supported by the Italian Ministry of Health (grant IZSLER – PRC2014003).
Support by INdAM-GNFM is also gratefully acknowledged by EB, CS and MG.
\\

\appendix
\section{}\label{appendice}

\begin{teoapp}\label{teo:exist} There exists an optimal solution of the optimal control problem \eqref{TOC}. \end{teoapp}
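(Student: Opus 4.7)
The plan is to invoke a Filippov--Cesari-type existence theorem for time-optimal control problems. There are four ingredients to verify: non-emptiness of the family of admissible controls (those steering $\x_0$ to $\mathcal{C}$ in finite time), a uniform \emph{a priori} bound on the admissible trajectories, convexity of the velocity set, and closedness of the target.

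First I would establish non-emptiness and the trajectory bound. The uncontrolled dynamics ($u \equiv 0$) give the classical SIR system, for which it is well known that $I(t)\to 0$ as $t\to\infty$; hence $I$ crosses the level $\varepsilon$ at some finite time $T_{unc}$, so the admissible set is non-empty and $\inf_u T \le T_{unc}$. The coordinate axes $\{S=0\}$ and $\{I=0\}$ are invariant for every policy, and a direct computation gives $\dot{S}+\dot{I}\le 0$ for each of the choices \eqref{gv}--\eqref{gr}. Every admissible trajectory therefore remains in the compact positively invariant set
\[
K = \{(S,I)\in\mathbb{R}^2 : S,I\ge 0,\ S + I \le S(0)+I(0)\},
\]
on which $f$ and $g$ are Lipschitz. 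This yields uniform bounds on $\x$ and $\dot{\x}$, hence equicontinuity of the family of admissible trajectories on $[0,T_{unc}]$.

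Next I would observe that the velocity set $V(\x) = \{f(\x)+ug(\x) : u\in[0,u^{max}]\}$ is the segment joining $f(\x)$ and $f(\x)+u^{max}g(\x)$, hence convex and compact at each $\x\in K$, while $\mathcal{C}=\{I=\varepsilon\}$ is closed. Given a minimizing sequence of admissible controls $\{u_n\}$ with eradication times $T_n \downarrow T^*:=\inf T$, extend each $u_n$ by $u^{max}$ beyond $T_n$. Weak-$*$ compactness of $L^\infty([0,T_{unc}];[0,u^{max}])$ yields a subsequence $u_n \rightharpoonup^* u^*$, and Arzel\`a--Ascoli applied to the associated trajectories gives $\x_n\to\x^*$ uniformly on $[0,T_{unc}]$. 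Because the dynamics is affine in $u$ and $V(\x)$ is convex, a standard Mazur-type weak-limit argument shows that the limit satisfies $\dot{\x}^*=f(\x^*)+u^*g(\x^*)$ almost everywhere.

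The remaining step, and the one I expect to be the main obstacle, is to certify that $T^*$ is precisely the \emph{first} time at which $I^*$ meets $\varepsilon$. Uniform convergence at once gives $I^*(T^*)=\lim I_n(T_n)=\varepsilon$, so $u^*$ is admissible with some eradication time $\widetilde T^*\le T^*$; the delicate point is to exclude $\widetilde T^*<T^*$. This is settled by the lower semicontinuity of first-hitting times under uniform convergence: since $I_n(t)>\varepsilon$ for all $t<T_n$ by the definition of $T_n$ as a first-hitting time, passing to the uniform limit yields $I^*(t)\ge\varepsilon$ for all $t<T^*$, so the first crossing of the target by $\x^*$ occurs exactly at $T^*$. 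Combined with $T^*=\inf T$, this proves that $u^*$ is an optimal control.
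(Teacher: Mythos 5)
Your argument is correct, but it takes a genuinely different route from the paper's. The paper does not use the direct method at all: it first invokes the bang-bang characterization of Theorem \ref{teo bang-bang} to restrict the competition to the one-parameter family $\mathcal{A}$ in \eqref{A}, identifies each such control with its starting intervention time $\tau\in[0,\tau^{max}]$, and proves that the resulting map $J:\tau\mapsto T$ is continuous on that compact interval (continuous dependence on initial data plus strict monotonicity of $I$ near the level crossing), so that Weierstrass yields a minimizer. Your Filippov--Cesari argument buys logical priority: the paper's reduction to $\mathcal{A}$ rests on the Pontryagin principle, a necessary condition that presupposes the existence of an optimizer, so as a standalone existence proof it has a circular flavor which your approach avoids, since you verify the compact invariant region (the paper's $\mathcal{Q}_{N_0}$ of Remark \ref{prop:regione_invariante}), convexity and compactness of the velocity segments, and closedness of the target directly, without any appeal to necessary conditions. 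What the paper's route buys in exchange is an elementary, self-contained continuity argument and, as a by-product, the parametrization $J(\tau)$ that is actually used for the numerical scheme. Two small remarks on your version: (i) weak-$*$ compactness delivers a minimizer that is a priori only measurable, whereas problem \eqref{TOC} restricts to piecewise continuous controls, so to land in the stated admissible class you must either enlarge it to measurable controls (standard and harmless here) or feed $u^*$ back into the PMP to conclude it is bang-bang and hence piecewise constant; (ii) your lower-semicontinuity argument for the first-hitting time is fine but slightly more than is needed --- once $u^*$ is known to be admissible with $I^*(T^*)=\varepsilon$, an eradication time strictly below $T^*=\inf T$ would already contradict the definition of the infimum.
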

\begin{proof}
By definition, there exists an optimal solution of \eqref{TOC} if the functional $J(u)$, which gives the eradication time of the controlled SIR problem \eqref{dot_x} as a function of the control, has (at least) a minimum point $u^*$ on the set of admissible controls. For each policy Theorem \ref{teo bang-bang} holds, namely the set of admissible controls is $\mathcal{A}$ given in \eqref{A}.
As detailed in Section \ref{stan}, we can see $J$ as a function that links the \interv time $\tau$ \eqref{eq:tau} to the eradication time $T$:
\begin{align*}
J : [0,\tau^{max}] &\rightarrow [0,+\infty)\\
		\tau \quad &\rightarrow \quad T
\end{align*}
Since the \interv time cannot be larger than the eradication time of the uncontrolled epidemic, then there exists an upper bound $\tau^{max}$ for $\tau$. We prove that $J$ admits at least a minimum point $\tau^*$ by proving that it is a continuous function on the bounded interval $[0,\tau^{max}]$.

First we prove that $J$ is a continuous function in 0, namely that $\lim_{h\rightarrow0^+} J(h)=J(0)$. We observe that by definition $J(0)$ is the eradication time $T_0 (=T_{\tau=0})$ of the solution $\x(t)$ of the controlled problem
\begin{align*}
\begin{cases}
\dot{\x}(t)=f(\x(t))+u^{max}g(\x(t)), \quad t\geq0; \\
\x(0)=\x_0, \quad \x(T_0)\in \mathcal{C} = \{ (S,I)\,:\; I=\varepsilon \}
\end{cases}
\end{align*}
while $J(h)$ is the eradication time $T_h$ of the solution $$\y(t)= \begin{cases} \y_1(t) & 0\leq t< h\\ \y_2(t) & t\geq h \end{cases}$$ where $\y_1$ is the solution of the uncontrolled problem
\begin{align*}
\begin{cases}
\dot{\y_1}(t)=f(\y_1(t)), \quad 0\leq t\leq h; \\
\y_1(0)=\x_0
\end{cases}
\end{align*}
while $\y_2$ is the solution of
\begin{align*}
\begin{cases}
\dot{\y_2}(t)=f(\y_2(t))+u^{max}g(\y_2(t)), \quad t\geq h; \\
\y_2(h)=\y_1(h), \quad \y_2(T_h)\in \mathcal{C}
\end{cases}
\end{align*}

By the Continuous Dependence on Initial Conditions Theorem, for a generic $t\geq h$ it holds:
\begin{align*}
|| \y(t)-\x(t) ||_\infty &\leq e^{L(t-h)} || \y(h)-\x(h) ||_\infty \\
												 &\leq e^{L(t-h)} || \y(h)-\x_0|| + ||\x_0-\x(h) ||_\infty \\
												 &\leq e^{L(t-h)} (c_{\y} + c_{\x} ) h
\end{align*}
where the last inequality follows from the Mean Value Theorem. This is true in particular for $t=T_h$: $|| \y(T_h)-\x(T_h) ||_\infty\leq \tilde{c}\, h$. Let us consider only the infected component of the two solutions: $I_{\x}(t)$ and $I_{\y}(t)$. Then $| I_{\y}(T_h) - I_{\x}(T_h) |\leq \tilde{c}\, h$, which leads to $| I_{\x}(T_0) - I_{\x}(T_h) |\leq \tilde{c} h$, since $I_{\y}(T_h) = \varepsilon = I_{\x}(T_0)$. This is equivalent to
\[ \lim_{h\rightarrow0^+} I_{\x}(T_h) = I_{\x}(T_0). \]
$I_{\x}(t)$ being a continuous positive function that is strictly monotone in a neighborhood of $T_0$, it is invertible and therefore we can state that $\lim_{h\rightarrow0^+} T_h = T_0, $
namely $\lim_{h\rightarrow0^+} J(h) = J(0)$.
The proof of the continuity of $J$ in a generic \interv time $\tau$ follows from the continuity in 0, using translation arguments.
\end{proof}

\begin{oss}\label{1quadrante} If we consider non-negative initial data $S(0)$ and $I(0)$, then the solution of the differential system \eqref{dot_x} is non-negative at each time $t>0$. \end{oss}
\noindent Indeed, for all the chosen policies, the $I$ axis is a trajectory for the system; the $S$ axis is also a trajectory (for vaccination and culling policies) or is a set of stationary points (for isolation and reduction of transmission policies).

\begin{oss}\label{prop:regione_invariante} For each $k>0$ the set $\mathcal{Q}_k=\{S\geq0,\,I\geq0,\,S+I\leq k\}$ is a positively invariant (trapping) region. \end{oss}
\noindent Using results of Remark  \ref{1quadrante} it is sufficient to prove that for  $S,I>0$  the vector field evaluated on the boundary line $S+I=k$ points towards the internal part of the region $\mathcal{Q}_k$ \citep{guckenheimer}; it is straightforward for each policy since the scalar product of the vector field $f(\x)$ and the outward pointing normal vector of the boundary $\hat{n}=(1,1)^{\sf T}$ is negative in all cases.

\begin{corollario} Given an initial condition $\x_0=(S(0),I(0))\in\mathbb{R}^2_+$, let $\x(t)=(S(t),I(t))$ be the solution of \eqref{dot_x}. Then $I(t)\rightarrow0$ as $t\rightarrow+\infty$ for all control policies. \end{corollario}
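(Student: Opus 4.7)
The plan is to build everything around the total population $N(t)=S(t)+I(t)$ and to exploit two pointwise monotonicity estimates, $\dot S\le 0$ and $\dot N+\mu I\le 0$, that I expect to hold uniformly in $u\in[0,u^{max}]$ for each of the four policies. Remarks \ref{1quadrante} and \ref{prop:regione_invariante} already confine $(S(t),I(t))$ to the bounded triangle $\mathcal{Q}_k$ with $k=S(0)+I(0)$, so monotonicity will force $S$ and $N$ each to converge, hence also $I=N-S$ to converge, and integrability of $I$ will pin the limit to $0$.

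The first step is to verify the two pointwise inequalities for each policy. Using the explicit expressions \eqref{gv}--\eqref{gr} together with the non-negativity of $u$, $S$, $I$ guaranteed by Remark \ref{1quadrante}, a direct calculation yields $\dot S\le 0$ in every case, together with $\dot N=-\mu I-uS$ for vaccination, $\dot N=-(\mu+u)I$ for isolation, $\dot N=-uS-(\mu+u)I$ for culling, and $\dot N=-\mu I-2u\beta SI$ for reduction of transmission. In all four cases one obtains $\dot N\le -\mu I\le 0$. The only point requiring some attention is reduction of transmission, since $g_r$ is nonlinear in the state and one must check that the two $-u\beta SI$ contributions reinforce rather than offset the $-\mu I$ term; boundedness of $\beta SI$ inside $\mathcal{Q}_k$ makes this check routine.

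The second step is the limit argument. Since $S(t)$ and $N(t)$ are both non-negative and non-increasing, they converge to finite limits $S_\infty$ and $N_\infty$, and therefore $I(t)\to I_\infty:=N_\infty-S_\infty\ge 0$. Integrating $\dot N\le -\mu I$ on $[0,t]$ and letting $t\to+\infty$ gives
\[
\mu\int_0^{+\infty}I(s)\,ds\le N(0)-N_\infty<+\infty.
\]
If $I_\infty>0$, then $I(s)\ge I_\infty/2$ for all sufficiently large $s$, which contradicts the finiteness of the integral above. Hence $I_\infty=0$, as claimed. I expect the main obstacle to be merely bookkeeping across the four different $g$'s rather than any subtle analytic estimate: no Barbalat-type argument or compactness machinery is needed, because convergence of $I$ comes for free from convergence of $S$ and $N$, and the sole computational content is the common lower bound $-\mu I$ on $\dot N$ that unifies all four policies.
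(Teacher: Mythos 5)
Your proof is correct, but it follows a genuinely different route from the paper's. The paper argues qualitatively: the trapping region $\mathcal{Q}_{N_0}$ is compact, $\dot S<0$ in its interior excludes periodic orbits, so every orbit converges to a stationary point, and all stationary points have $I=0$ (the origin for vaccination and culling, the whole $S$-axis for isolation and reduction of transmission). You instead extract two pointwise inequalities valid for every admissible $u(t)\in[0,u^{max}]$ --- $\dot S\le 0$ and $\dot N\le-\mu I$ with $N=S+I$ --- deduce convergence of $S$ and $N$ (hence of $I$) from monotone boundedness, and conclude $I_\infty=0$ from $I\in L^1(0,\infty)$. Your computations check out for all four policies; note only that for reduction of transmission the dynamics actually used in the paper (cf.\ the Hamiltonian \eqref{R:H}) is $\dot S=-\beta(1-u)SI$, $\dot I=\beta(1-u)SI-\mu I$, which gives $\dot N=-\mu I$ exactly, even simpler than the $-\mu I-2u\beta SI$ you obtain from the literal form of \eqref{gr}; either way the bound $\dot N\le-\mu I$ survives. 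What your approach buys is robustness: the paper's ``no periodic orbits, hence convergence to an equilibrium'' step is a planar autonomous-systems argument, and the controlled system is non-autonomous for a general time-dependent $u$ (it is only eventually autonomous for controls in $\A$); moreover, for isolation and reduction of transmission the equilibria form a continuum, so convergence to a single point needs a word of justification there. Your monotonicity-plus-integrability argument sidesteps both issues and works for arbitrary measurable controls, at the cost of being slightly less geometric. What the paper's approach buys is a one-line appeal to standard phase-plane intuition and an explicit identification of the limit set.
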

\begin{proof} By Remark \ref{prop:regione_invariante} we know that the set $\mathcal{Q}_{N_0}$, where $N_0=S(0)+I(0)$,  is a   (trapping) region. Moreover, in this region the function $\dot{S}(t)$ has a constant negative sign, so there cannot be periodic trajectories and all orbits must converge to a stationary point $\bar{\x}\in\mathcal{Q}_{N_0}$. It is easy to prove that the number of infected individuals of a stationary point is always zero. In fact, for the vaccination and culling policies, the only stationary point is $\bar{\x}=(0,0)$, while for isolation and reduction of transmission policies the stationary points are all those of the $S$ axis.
\end{proof}

\section{}
Throughout all the proofs we omit the superscript $*$ for the optimal quantities, in order to simplify the notation.

\subsection{Proof for optimal linear term policies}\label{dimgen}

Let $\x(t)=(S(t),I(t))^{\sf T}$ denote the optimal solution for the control problem with linear term policy, suitable to model vaccination, isolation or culling for proper values of parameters $\alpha_1$ and $\alpha_2$; let  $\uu(t)$  be the control term, $\llambda(t)=(\lambda_S(t),\lambda_I(t))^{\sf T}$  the corresponding adjoint variables and $T$ the optimal eradication time. By the Pontryagin's Minimum Principle, the Hamiltonian function, the switching function and its derivative are respectively:
\begin{gather}
\H(\x,\llambda,\uu) = 1 - (\beta SI + \au\uu S)\ls + (\beta SI-\mu I-\ad \uu I)\li \label{V:H}\\
\psi(\x,\llambda)=-\au S\ls-\ad I\li, \qquad \dot{\psi}(\x,\llambda)=\beta SI(\au\li-\ad\ls),
\label{V:psi}
\end{gather}
and the adjoint variables satisfy the following system of ODEs:
\begin{equation}
\begin{cases}
\dot{\ls} &= (\ls-\li)\beta I+\au\uu\ls\\
\dot{\li} &= (\ls-\li)\beta S+\mu\li+\ad\uu\li.
\end{cases}
\label{V:adjoint}
\end{equation}

The sketch of the proof of Theorem \ref{teo:VIC} is as follows.

(i) First we prove that the control is non-singular, namely that the function $\psi$ vanishes only in isolated points. Suppose in fact that $\psi$ vanishes in an open interval $B$. Then also all the derivatives vanish there and in particular $\psi=\dot{\psi}=0$ in $B$, which yields by some algebra $\ls=\li=0$ by \eqref{V:psi}, since $S,I>0$ when $S(0),I(0)>0$ (see Remark \ref{1quadrante}). This is in contradiction with Theorem \ref{PMP}, as the adjoint variables $\ls$ and $\li$ cannot vanish simultaneously by construction, therefore $\psi$ vanishes only in isolated points. As a consequence, the control is a piecewise constant function $\uu(t)$ that can assume only two values: 0 and $\umax$. The switching times are defined as the time instants at which the function $\psi(t)$ changes its sign and, consequently, the function $\uu(t)$ changes its value. Therefore two types of switch can occur: either the value of $\uu(t)$ is 0 in a left-neighbourhood of the switching time and is $\umax$ in a right-neighbourhood, and we denote it by $0\rightarrow \umax$, or the converse, which is denoted by $\umax \rightarrow 0$.

(ii) Next we show that the optimal control in a left-neighbourhood of the eradication time $T$ must be equal to $\umax$. By condition 3 of Theorem \ref{PMP} $\psi(T)=-\ad I(T)\li(T)$ and $\dot{\psi}(T)=\au\beta S(T)I(T)\li(T)$. The sign of the function $\psi$ in the left-neighbourhood of $T$ will then be determined by the sign of $\li(T)$. Substituting $\ls(T)=0$ in \eqref{V:H} and by condition 2 of Theorem \ref{PMP} we get $\li(T)=-\dot{I}(T)^{-1}$, which is positive, since $\dot{I}(T)<0$. As a consequence, $\psi(T)\leq0$ and $\dot{\psi}(T)\geq0$. Since they cannot vanish simultaneously, as $\au$ and $\ad$ are not simultaneously zero, $\psi$ is negative in a left-neighborhood of $T$.

(iii) Now we prove that there can be at most one switching time, relevant to the switch $0\rightarrow \umax$. Let $\tau_s$ be a generic switching time, namely $\psi(\tau_s)=0$. Then $-\au S(\tau_s)\ls(\tau_s)=\ad I(\tau_s)\li(\tau_s)$ by \eqref{V:psi}. Suppose $\ad\neq0$, then at the switching time $\li=-\frac{\au S\ls}{\ad I}$. Substituting this relation in \eqref{V:H} and by condition 2 of Theorem \ref{PMP}, we can write $\ls$, and therefore $\li$ and $\dot{\psi}$, as functions of $Q(t)=\beta I(t) + \frac{\au}{\ad} (\beta S(t) - \mu)$, which is a decreasing function since $\dot{Q}(t)<0$:
$$\ls(\tau_s)=(Q(\tau_s)S(\tau_s))^{-1}\!\!, \qquad \li(\tau_s)=-\frac{\au}{\ad}(Q(\tau_s)I(\tau_s))^{-1}\!\!, 
\qquad
\dot{\psi}=-\frac{\beta}{Q} \left( \frac{\au^2}{\ad}S+\ad I\right).
$$
In particular, we can see that the sign of $\dot{\psi}$ is opposite to the sign of $Q$. Suppose that there are multiple switching times $\tau_s^{(j)}$, $j=1,\ldots,n$. We have already proved that $\uu(T)=\umax$, so at the last switching time $\dot{\psi}(\tau_s^{(n)})<0$ must hold, thus $Q(\tau_s^{(n)})>0$. Since $Q$ is a decreasing function, this means that $Q$ is positive in the interval $[0,\tau_s^{(n)}]$, and it implies that all the switching times $\tau_s^{(j)}$ are from no control (positive values of $\psi$) to maximum control (negative values of $\psi$). This is not possible, therefore there can be at most a unique switch from no control to the maximum control rate $\umax$, namely $\dot{\psi}(\tau_s)<0$.

Now suppose $\ad=0$. We still prove that there can be at most one switching time, relevant to the switch $0\rightarrow \umax$ and that, in addition, the switch can only occur before the peak time. Let $\tau_s$ be a generic switching time, namely $\psi(\tau_s)=0$. Then $\ls(\tau_s)=0$ by \eqref{V:psi} and, analogously to what happens at the eradication time $T$, $\li(\tau_s)=-\dot{I}(\tau_s)^{-1}$. Thus, the sign of $\dot{\psi}(\tau_s)$ is opposite to the sign of $\dot{I}(\tau_s)$, which is positive (respectively, negative) before (resp. after) the peak time of the infection $t_p$.
The only possible change in sign of the function $\psi$ after the peak is then from negative to positive, which is not admissible, since we proved that $\psi$ is negative in a left-neighborhood of $T$. Thus the switch can only occur before the peak and, being $\dot{\psi}(\tau_s)$ always negative, it must be unique. Moreover, since $\psi$ changes its sign from positive to negative values, the control switches from 0 to the maximum rate $\umax$.

\subsection{Proof for optimal vaccination policy}\label{propvacc}

The sketch of the proof of Theorem \ref{teo:vacc} is as follows.

(i) The position of the switch with respect to the peak of infection follows from  \ref{dimgen}, point (iii)  (case $\ad=0$).

(ii)  By definition of the basic reproduction number, we know that if $\R_0<1$ then the number of infected individual is monotonically decreasing in time, namely the peak of the infection is $t_p=0$. As we already proved that there cannot be a switch for $t>t_p$, in this case the optimal control must be $\uv(t)\equiv \vmax$.

Suppose that $\R_0>1$ and that the optimal control is delayed with switch $0\rightarrow \vmax$ at time $\tau_s>0$. Then we prove that the relation $\mu>\vmax$ must hold.

First we prove that, under those hypotheses, the function $\psi$ has a minimum point $m_\psi$ in $(\tau_s,T)$ at which $\li(m_\psi)<\ls(m_\psi)$, as sketched in Fig. \ref{fig:dimostrazione}. The function $\psi$ vanishes at $\tau_s$ (by definition) and at $T$ (by the transversality condition), while it is strictly negative between the two points, therefore it must have at least a minimum point. Since $\psi$ is a $\mathcal{C}^1$ function, in such points $\dot{\psi}=0$ and thus also $\li=0$, by \eqref{V:psi}. Substituting this latter value in the second derivative of the switching function and recalling the definition of $\psi$ in \eqref{V:psi} we obtain $$\ddot{\psi} = \beta S I (\beta S \ls - \beta I \li -u^{max}\li) =\beta^2S^2I\ls = -\beta^2 SI\psi.$$
 Then $\ddot{\psi}$ is positive and we can state that $\psi$ has only an extremal point in that interval, and more specifically that it is a minimum point, which we denote by $m_\psi$. Moreover, since $\li(m_\psi)=0$ and $\ls(m_\psi)>0$, it is straightforward that $\li(m_\psi)<\ls(m_\psi)$.

Similarly, we prove that the function $\ls$ has a unique maximum $M_{\ls}$ in the interval $[\tau_s,T]$ at which $\li(M_{\ls})>\ls(M_{\ls})$, as sketched in Fig. \ref{fig:dimostrazione}. In fact, it vanishes at $\tau_s$ (since $\psi(\tau_s)=0$) and at $T$ (for the transversality condition) and it is strictly positive between the two points, since $\psi<0$. On the interval $[\tau_s,T]$, being $u(t)=\vmax$, $\ls$ is a $\mathcal{C}^1$ function, therefore its maximum and minimum points are characterized by $\dls=0$, namely $\ls=\beta I \li/(\beta I+\vmax)$ from \eqref{V:adjoint}. Substituting this value in \eqref{V:H} and recalling that $\H=0$ we obtain that in the extremal points $\li=1/(\mu I)$. Substituting those values in the second derivative of $\ls$ we obtain:
\[ \ddot{\lambda}_S = \ls[(\beta I+\vmax)^2 -\mu\beta I] -\li \beta I(\beta I +\vmax) = -\beta^2 I/(\beta I+\vmax),\]
which is negative and therefore in the interval $(\tau_s,T)$ the function $\ls$ has a unique maximum point, which we denote by $M_{\ls}$. Moreover $$\ls(M_{\ls})=\beta I(M_{\ls}) \li(M_{\ls})/(\beta I(M_{\ls})+\vmax)<\li(M_{\ls}).$$
	
	Evaluating $\dls$ at the point $m_\psi$, by \eqref{V:adjoint} we obtain $$\dls(m_\psi)=(\beta I(m_\psi)+\vmax)\ls(m_\psi)>0,$$ therefore $m_\psi<M_{\ls}$. Being $\li(m_\psi)<\ls(m_\psi)$ and $\li(M_{\ls})>\ls(M_{\ls})$, there must exist a point $\sigma\in(m_\psi,M_{\ls})$ such that $\ls(\sigma)=\li(\sigma)$ and $\dli(\sigma)>\dls(\sigma)$, as sketched in Fig. \ref{fig:dimostrazione}. This last inequality reduces to $\mu\li(\sigma)>\vmax\ls(\sigma)$, therefore $\mu>\vmax$ is a necessary condition for having a positive switching time. In conclusion, if $\vmax>\mu$ the optimal control is the constant control $\uv(t)\equiv \vmax$.
	
	\begin{figure}\centering
	\includegraphics[width=.7\columnwidth]{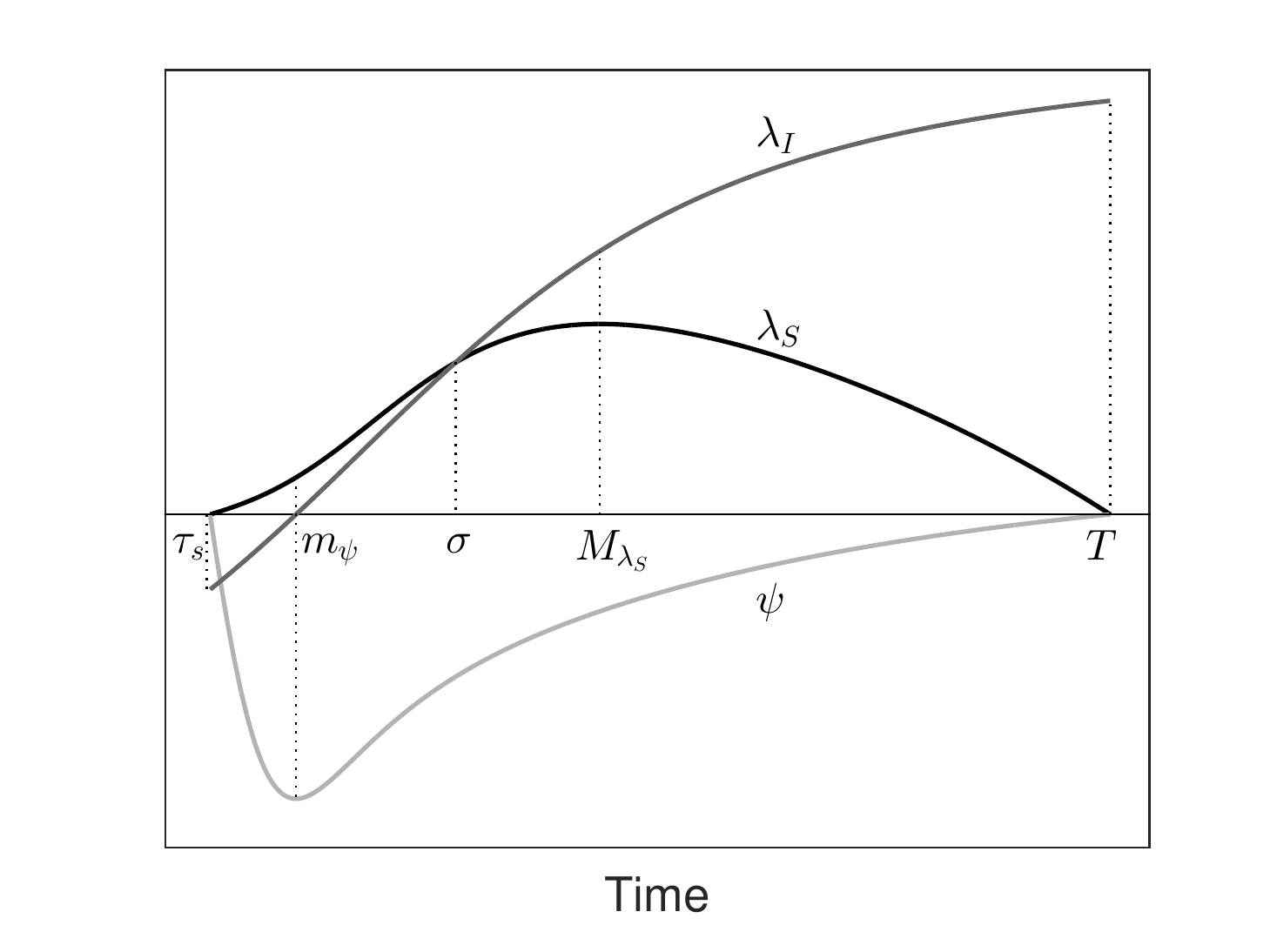}%
	\caption{Optimal vaccination problem with delayed control. Schematization of the switching function $\psi(t)$ and of the adjoint variables $\ls(t)$, $\li(t)$ on the interval $[\tau_s,T]$.}%
	\label{fig:dimostrazione}%
	\end{figure}

\subsection{Proof for optimal reduction of transmission policy}\label{dimR}

Let $\x(t)=(S(t),I(t))^{\sf T}$ denote the optimal solution for the reduction of the transmission control problem, with control term $\ur(t)$,  $\llambda(t)=(\lambda_S(t),\lambda_I(t))^{\sf T}$ the corresponding adjoint variables and $T$ the optimal eradication time. By the Pontryagin's Minimum Principle, the Hamiltonian function, the switching function and its derivative are respectively:
\begin{gather}
\H(\x,\llambda,\ur) = 1 + (\li-\ls)\beta(1-\ur) SI -\mu I\li \label{R:H}\\
\psi(\x,\llambda)=(\ls-\li)\beta SI, \qquad \dot{\psi}(\x,\llambda)=-\mu\beta SI\ls,
\label{R:psi}
\end{gather}
and the adjoint variables satisfy the following system of ODEs:
\begin{equation*}
\begin{cases}
\dot{\ls} &= (\ls-\li)\beta(1-\ur) I\\
\dot{\li} &= (\ls-\li)\beta(1-\ur) S+\mu\li.
\end{cases}
\end{equation*}

For the proof of Theorem \ref{teo:rid}, first we show that the control is non-singular, namely that the function $\psi$ vanishes only in isolated points. Suppose that $\psi$ vanishes in an open interval $B$. Then $\psi=\dot{\psi}=0$ in $B$, namely $\ls=\li=0$ (see \eqref{R:psi}), which is in contradiction with the statement of the Theorem \ref{PMP}. Therefore, $\psi$ can vanish only in isolated points.
Substituting $\ls(T)=0$ (the transversality condition) in \eqref{R:H} and by condition 2 of Theorem \ref{PMP} we get $\li(T)=-\dot{I}(T)^{-1}$, which is positive, being $\dot{I}(T)<0$. As a consequence, $\psi(T)<0$ by \eqref{R:psi}.

Let $\tau_s$ be a generic switching time, namely $\psi(\tau_s)=0$. Then $\ls(\tau_s)=\li(\tau_s)$ by \eqref{R:psi} and, by equation \eqref{R:H}, they are both equal to $(\mu I(\tau_s))^{-1}$. Substituting this value in \eqref{R:psi} we obtain $\dot{\psi}(\tau_s)=-\beta S(\tau_s)$, which is negative. Therefore, since the sign of the derivative of $\psi$ is constant at every switching time $\tau_s$, there can be at most a unique switch from no control (positive values of $\psi$) to the maximum rate of control $\rmax$ (negative values of $\psi$).

\newpage
\bibliographystyle{elsarticle-num}
\bibliography{MyBib}

\begin{thebibliography}{10}
\expandafter\ifx\csname url\endcsname\relax
  \def\url#1{\texttt{#1}}\fi
\expandafter\ifx\csname urlprefix\endcsname\relax\def\urlprefix{URL }\fi
\expandafter\ifx\csname href\endcsname\relax
  \def\href#1#2{#2} \def\path#1{#1}\fi

\bibitem{dong_hyun15}
K.~Dong-Hyun, Structural factors of the middle east respiratory syndrome
  coronavirus outbreak as a public health crisis in {K}orea and future response
  strategies, Journal of Preventive Medicine and Public Health 48 (2015)
  265--270.

\bibitem{kucharski15}
A.~J. Kucharski, A.~Camacho, S.~Flasche, R.~E. Glover, W.~J. Edmunds, S.~Funk,
  Measuring the impact of {E}bola control measures in {S}ierra {L}eone,
  Proceedings of the National Academy of Sciences of the United States of
  America 112 (2015) 14366--14371.

\bibitem{meuwissen99}
M.~P.~M. Meuwissen, S.~H. Horst, R.~B.~M. Huirne, A.~A. Dijkhuizen, A model to
  estimate the financial consequences of classical swine fever outbreaks:
  principles and outcomes, Preventive Veterinary Medicine 42~(3) (1999)
  249--270.

\bibitem{DEFRA}
DEFRA/DCMS, Economic cost of foot and mouth disease in the {UK}, Tech. rep.,
  {J}oint {W}orking {P}aper of the {D}epartment for the {E}nvironment, {F}ood
  and {R}ural {A}ffairs and the {D}epartment of {C}ulture, {M}edia and {S}port,
  UK (2002).

\bibitem{hvistendahl15}
M.~Hvistendahl, Enigmatic bird flu strain races across the {U}.{S}. {M}idwest,
  Science 348 (2015) 741--742.

\bibitem{horst99}
H.~S. Horst, C.~J. de~Vos, F.~H.~M. Tomassen, J.~Stelwagen, The economic
  evaluation of control and eradication of epidemic livestock diseases, Revue
  Scientifique et Technique de l'Office International des Epizoties 18 (1999)
  367--379.

\bibitem{hutber2006predictions}
A.~M. Hutber, R.~P. Kitching, E.~Pilipcinec, Predictions for the timing and use
  of culling or vaccination during a foot-and-mouth disease epidemic, Research
  in Veterinary Science 81~(1) (2006) 31--36.

\bibitem{gordon2008bovine}
S.~V. Gordon, Bovine {TB}: stopping disease control would block all live
  exports, Nature 456~(7223) (2008) 700.

\bibitem{koopmans04}
M.~Koopmans, B.~Wilbrink, M.~Conyn, G.~Natrop, H.~van~der Nat, H.~Vennema,
  A.~Meijer, J.~van Steenbergen, R.~Fouchier, A.~Osterhaus, A.~Bosman,
  Transmission of {H7N7} avian influenza {A} virus to human beings during a
  large outbreak in commercial poultry farms in {T}he {N}etherlands, Lancet 363
  (2004) 587--593.

\bibitem{nerlich07}
B.~Nerlich, C.~Halliday, Avian flu: the creation of expectations in the
  interplay between science and the media, Sociology of Health \& Illness 29
  (2007) 46--65.

\bibitem{otte08}
J.~Otte, J.~Hinrichs, J.~Rushton, D.~Roland-Holst, D.~Zilberman, Impacts of
  avian influenza virus on animal production in developing countries, CAB
  Reviews: Perspectives in Agriculture, Veterinary Science, Nutrition and
  Natural Resources 3~(080).

\bibitem{mangen04}
M.-J.~J. Mangen, A.~M. Burrell, M.~C.~M. Mourits, Epidemiological and economic
  modelling of classical swine fever: application to the 1997/1998 {D}utch
  epidemic, Agricultural Systems 81 (2004) 37--54.

\bibitem{roche15}
S.~E. Roche, M.~G. Garner, R.~L. Sanson, C.~Cook, C.~Birch, J.~A. Backer,
  C.~Dub\'e, K.~A. Patyk, M.~A. Stevenson, Z.~D. Yu, T.~G. Rawdon,
  F.~Gauntlett, Evaluating vaccination strategies to control foot-and-mouth
  disease: a model comparison study, Epidemiology \& Infection 143 (2015)
  1256--1275.

\bibitem{durr13}
S.~D\"urr, H.~zu~Dohna, E.~Di~Labio, T.~E. Carpenter, M.~G. Doherr, Evaluation
  of control and surveillance strategies for classical swine fever using a
  simulation model, Preventive Veterinary Medicine 108 (2013) 73--84.

\bibitem{rossi15}
G.~Rossi, G.~A. De~Leo, S.~Pongolini, S.~Natalini, S.~Vincenzi, L.~Bolzoni,
  Epidemiological modelling for the assessment of bovine tuberculosis
  surveillance in the dairy farm network in {E}milia-{R}omagna ({I}taly),
  Epidemics 11 (2015) 62--70.

\bibitem{longworth14}
N.~Longworth, M.~C.~M. Mourits, H.~W. Saatkamp, Economic analysis of {HPAI}
  control in {T}he {N}etherlands {I}: Epidemiological modelling to support
  economic analysis, Transboundary and Emerging Diseases 61 (2014) 199--216.

\bibitem{jiang07}
C.~Jiang, Optimal control of {SARS} epidemics based on cybernetics,
  International Journal of Systems Science 38 (2007) 451--457.

\bibitem{zhang05}
J.~Zhang, J.~Lou, Z.~Ma, J.~Wu, A compartmental model for the analysis of
  {SARS} transmission patterns and outbreak control measures in {C}hina,
  Applied Mathematics and Computation 162 (2005) 909--924.

\bibitem{abakuks73}
A.~Abakuks, An optimal isolation policy for an epidemic, Journal of Applied
  Probability 10 (1973) 247--262.

\bibitem{abakuks74}
A.~Abakuks, Optimal immunization policies for epidemics, Advances in Applied
  Probability 6 (1974) 494--511.

\bibitem{morton74}
R.~Morton, K.~Wickwire, On the optimal control of a deterministic epidemic,
  Advances in Applied Probability 6 (1974) 622--635.

\bibitem{wickwire75}
K.~Wickwire, Optimal isolation policies for deterministic and stochastic
  epidemics, Mathematical Biosciences 26 (1975) 325--246.

\bibitem{behncke00}
H.~Behncke, Optimal control of deterministic epidemics, Optimal Control
  Applications and Methods 21 (2000) 269--285.

\bibitem{hansen11JOMB}
E.~Hansen, T.~Day, Optimal control of epidemics with limited resources, Journal
  of Mathematical Biology 62 (2011) 423--451.

\bibitem{bolzoni14}
L.~Bolzoni, V.~Tessoni, M.~Groppi, G.~A. De~Leo, React or wait: which optimal
  culling strategy to control infectious diseases in wildlife, Journal of
  Mathematical Biology 69 (2014) 1001--1025.

\bibitem{pontryagin}
L.~Pontryagin, V.~Boltyanskii, R.~Gamkrelidze, E.~Mishchenko, The
  {M}athematical {T}heory of {O}ptimal {P}rocesses, International series of
  monographs in pure and applied mathematics, Interscience Publishers, Los
  Angeles, USA, 1962.

\bibitem{anderson79}
R.~Anderson, R.~May, Population biology of infectious diseases: {P}art {I},
  Nature 280 (1979) 361--367.

\bibitem{allen}
F.~Brauer, P.~Van~den Driessche, J.~Wu, Mathematical {E}pidemiology, Springer,
  Berlin-Heidelberg, 2008.

\bibitem{kaya}
C.~Y. Kaya, J.~L. Noakes, Computational method for time-optimal switching
  control, Journal of Optimization Theory and Applications 117~(1) (2003)
  69--92.

\bibitem{lenhart}
S.~Lenhart, J.~T. Workman, Optimal {C}ontrol {A}pplied to {B}iological
  {M}odels, Chapman \& Hall /CRC Mathematical and Computational Biology Series,
  Boca Raton, USA, 2007.

\bibitem{martinon}
P.~Martinon, J.~Gergaud, Using switching detection and variational equations
  for the shooting method, Optimal Control Applications and Methods 28~(2)
  (2007) 95--116.

\bibitem{silva}
C.~Silva, E.~Tr{\'e}lat, Smooth regularization of bang-bang optimal control
  problems, IEEE Transactions on Automatic Control 55~(11) (2010) 2488--2499.

\bibitem{shamsi}
M.~Shamsi, A modified pseudospectral scheme for accurate solution of bang-bang
  optimal control problems, Optimal Control Applications and Methods 32~(6)
  (2011) 668--680.

\bibitem{thom72}
R.~Thom, Stabilit\'e {S}tructurelle et {M}orphog\'en\`ese, {E}ssai d'une
  {T}h\'eorie {G}\'en\'erale des {M}od\`eles, Benjamin, Reading, Mass., USA,
  1972.

\bibitem{iacoviello08}
D.~Iacoviello, G.~Liuzzi, Fixed/free final time {SIR} epidemic models with
  multiple controls, International Journal of Simulation Modelling 7 (2008)
  81--92.

\bibitem{handel07}
A.~Handel, I.~Longini, R.~Antia, What is the best control strategy for multiple
  infectious disease outbreaks, Proceedings of the Royal Society B 274 (2007)
  833--837.

\bibitem{hansen11PRSB}
E.~Hansen, T.~Day, Optimal antiviral treatment strategies and the effects of
  resistance, Proceedings of the Royal Society B 278 (2011) 1082--1089.

\bibitem{choisy06}
M.~Choisy, P.~Rohani, Harvesting can increase severity of wildlife disease
  epidemics, Proceedings of the Royal Society B 273 (2006) 2025--2034.

\bibitem{bolzoni07_plos}
L.~Bolzoni, L.~Real, G.~De~Leo, Transmission heterogeneity and control
  strategies for infectious disease emergence, Plos ONE 2 (2007) e747.

\bibitem{potapov12}
A.~Potapov, E.~Merrill, M.~Lewis, Wildlife disease elimination and density
  dependence, Proceedings of the Royal Society B 279 (2012) 3139--3145.

\bibitem{bolzoni13}
L.~Bolzoni, G.~A. De~Leo, Unexpected consequences of culling on the eradication
  of wildlife diseases: the role of virulence evolution, American Naturalist
  181 (2013) 301--313.

\bibitem{bolzoni07EDE}
L.~Bolzoni, G.~A. De~Leo, A cost analysis of alternative culling strategies for
  the eradication of classical swine fever in wildlife, Environment and
  Development Economics 12 (2007) 653--671.

\bibitem{guckenheimer}
J.~Guckenheimer, P.~Holmes, Nonlinear {O}scillations, {D}ynamical {S}ystems,
  and {B}ifurcations of {V}ector {F}ields, Vol.~42, Springer Science \&
  Business Media, 2002.

\end{thebibliography}

\end{document}